\title[Constructive multi-typed theory and second order arithmetic]{Comparison of constructive multi-typed theory with subsystems of second order arithmetic}
\date{}
\keywords{Constructive mathematics, predicative comprehension, second-order arithmetic, equiconsistency, truth predicate, disjunction property, existence property}
\subjclass[2010]{03E70 Nonclassical and second-order set theories; 03F50 Metamathematics of constructive systems}
\author{Farida Kachapova}
\address{School of Computer and Mathematical Sciences\\
Auckland University of Technology\\
Auckland, New Zealand}
\email{farida.kachapova@aut.ac.nz}
\newtheorem{theorem}{Theorem}[section]
\newtheorem{lemma}[theorem]{Lemma}
\newtheorem{corollary}[theorem]{Corollary}
\theoremstyle{definition}
\begin{document}

\begin{abstract}
This paper describes an axiomatic theory $BT$ for constructive mathematics. $BT$ has a predicative comprehension axiom for a countable number of set types and usual combinatorial operations. $BT$ has intuitionistic logic, is consistent with classical logic and has such constructive features as consistency with formal Church thesis, and existence and disjunction properties. $BT$ is mutually interpretable with a so called theory of arithmetical truth $PATr$ and with a second-order arithmetic $SA$ that contains infinitely many sorts of sets of natural numbers. We compare $BT$ with some standard second-order arithmetics and investigate the proof-theoretical strengths of fragments of $BT$, $PATr$ and $SA$.
\end{abstract}

\maketitle

\section{Introduction}

Beeson \cites{bees78a, bees78, bees85} and Feferman \cites{fefe75, fefe79} introduced axiomatic theories containing operations and sets. These theories are intended for developing constructive mathematics in Bishop's style \cites{bish12,bish11}. The theories have intuitionistic logic and are consistent with classical logic. Kashapova \cite{kash84} generalised the Beeson's theory $BEM+(CA)$ \cite{bees78} to a language with infinitely many types of sets. The resulting axiomatic theory was studied further in \cite{kach14a} and \cite{kach14}. In \cite{kach14a} we constructed a realizability and a set-theoretical model for $BT$, and proved existence property of $BT$. In \cite{kach14} we constructed an interpretation of $BT$ in a so called theory of arithmetical truth $PATr$ obtained from Peano arithmetic by adding infinitely many truth predicates. 

In this paper we study other metamathematical properties of the theory $BT$. One of them is the disjunction property. We show that $BT$ is mutually interpretable with $PATr$ and with a second-order arithmetic $SA$ containing infititely many sorts for sets of natural numbers. We also show that $BT$ is interpretable in the second-order arithmetic with $\triangle^1_1$ comprehension axiom. We prove that each fragment $BT_s$ with types $\leqslant s$ is weaker than next fragment $BT_{s+1}$. In particular, $BT$ is stronger that the Beeson's theory $BEM+(CA)$.

In section \ref{section:BT} we give a detailed definition of the theory $BT$ and describe its constructive properties: existence and disjunction properties, and consistency with formal Church thesis. 

In section \ref{section:SA} we define the multi-sorted arithmetic $SA$ and interpret it in the second-order arithmetic with $\triangle^1_1$ comprehension axiom.

In section \ref{sec:main} we define the theory of arithmetical truth $PATr$. In section \ref{section:interpretability} we prove that the theories $BT$, $BT$ with classical logic, $SA$ and $PATr$ are interpretable in one another, and so are their corresponding fragments. 

In section \ref{section:comparison} we show that each fragment $BT_{s+1}$ is proof-theoretically stronger than the previous fragment $BT_{s}$ $(s\geqslant 0)$. The same is proven for corresponding fragments of the theories $PATr$ and $SA$. 

In the rest of the introduction we explain some notations and terminology. 

All theories considered in this paper are first-order axiomatic theories (a well-known definition of a first-order axiomatic theory can be found, for example, in \cite{mend09}).

The symbol $\leftrightharpoons$ means ``equals by definition". The symbol $\diamond$ denotes a logical connective $\wedge,\vee$ or $\supset$, and the symbol $Q$ denotes a quantifier $\forall$ or $\exists$. In each of our axiomatic theories we have the logical constant $\bot$ for falsity and we regard $\neg \varphi$ as an abbreviation for $\varphi\supset\bot$. The complexity of a formula $ \varphi $ is the number of occurrences of logical symbols (the main three connectives and quantifiers) in $ \varphi $. For any formula $\varphi$ we denote $\overline{\overline{\varphi}}$ the closure of $\varphi$, that is, the formula $\varphi$ with universal quantifiers over all its parameters. We denote $\tau\left[x_1,\ldots,x_n/t_1,\ldots,t_n \right]$ the result of proper substitution of terms $t_1,\ldots,t_n$ for variables $x_1,\ldots,x_n$ in an expression $\tau$. The complexity of a term $t$ is the number of occurrences of functional symbols in $t$.

We fix a one-to-one coding of all finite sequences of natural numbers such that 0 is the code for the empty sequence. In a theory containing first-order arithmetic we use the notations:
\begin{list}{•}{•}
\item $(n_1,\ldots,n_k)$ as the code for sequence $n_1,\ldots,n_k$;
\item $(n)_i$ for the $i$th element of the sequence with code $n$;
\item $lh(n)$ for the length of the sequence with code $n$. 
\end{list}

We fix a numbering of partial recursive functions and denote $\{e\}(n)$ the value at $n$ of the partial recursive function with number $e$ if this value is defined.

For a natural number $n$ we denote $\overline{n}$ the formal arithmetical term for $n$, that is $\overline{n}=\underbrace{1+1\ldots +1}_{n\;times}$. 
\medskip

We assume that for any axiomatic theory $K$ some G\"{o}del numbering of its expressions is fixed. For an expression $q$ we denote $\llcorner q \lrcorner$ the G\"{o}del number of $q$ in this numbering; $t_m$ and $\varphi_m$ denote the term and formula with G\"{o}del number $m$, respectively. 

The notation $K\vdash \varphi$ means that formula $\varphi$ is derivable in theory $K$. The theory $K$ is consistent if it is not true that $K\vdash \bot$. 
$Proof_K(m,n)$ denotes the arithmetical formula stating that $n$ is the G\"{o}del number of a formal proof in the theory $K$ for formula $\varphi_m$. The formula $Pv_K(m)\leftrightharpoons\exists n Proof_K(m,n)$ means that $\varphi_m$ is derivable in the theory $K$. The formula $Con_K\leftrightharpoons \neg Pv_K(\llcorner\bot\lrcorner)$ means that the theory $K$ is consistent.

In this paper we consider axiomatic theories where variables have superscripts for types or sorts. A superscript for a variable is usually omitted when the variable is used for the second time or more in a formula or in a proof (so its type or sort is obvious). In metamathematical proofs we use classical logic.

\section{Axiomatic theory $BT$} 
\label{section:BT}
\subsection{Definition of theory $BT$}

Theory $BT$ was first introduced in  \cite{kash84} as a generalisation of Beeson's theory $BEM+(CA)$ \cite{bees78}. The language of the theory $BT$ has the following variables:
\begin{list}{}{}
\item $m,n,\ldots$ over natural numbers (variables of type $\omega$) and 
\item $X^k, Y^k, Z^k,\ldots$ of type $k$ $(k=0,1,2,\ldots)$.
\end{list}

We will identify the variables $X^0, Y^0, Z^0,\ldots$ of type 0 with variables $x,y,z,\ldots$, respectively, which we call \textit{operation variables}. We consider the type $\omega$ to be smaller than any other type. Variables of type 0 are interpreted as operations and variables of types $\geqslant 1$ are interpreted as sets. 

$BT$ has a numerical constant 0 and the following operation constants:
\begin{list}{•}{•}
\item combinatorial constants $\underline{k}, \underline{s}, \underline{d},\underline{p},\underline{p}_1,\underline{p}_2$
\medskip
\item and comprehension constants $\underline{c}_n$ $(n\geqslant 0)$, which are used for constructing sets.
\end{list}

There are no functional symbols in $BT$.

Predicate  symbols: 

$Ap(f,x,y),\; x=_{0\omega}m,\;x=_{0k}Y^k ,\; X^k\in_{k}Y^{k+1} (k\geqslant 0)$.
\medskip

$Ap(f,x,y)$ means that $y$ is the result of application of operation $f$ to $x$.

Atomic formulas are obtained from predicate symbols by substituting constants and variables of corresponding types. Formulas are constructed from atomic formulas and $\bot$ using logical connectives and quantifiers.

The language of $BT$ is defined. A formula of $BT$ is said to be $n$-\textit{elementary} if it contains only types $\leqslant n$, no quantifiers over variables of type $n$ and no predicate symbol $=_{0n}$.

External terms are defined recursively as follows.
\begin{enumerate}
\item Every constant and variable is an external term.
\item If $t$ and $\tau$ are external terms, then $t\tau$ is an external term.
\end{enumerate}

$t\tau$ is interpreted as the result of application of operation $t$ to $\tau$. External terms are generally not part of the language $BT$. The notation $t_1t_2t_3\ldots t_n$ means $(\ldots((t_1t_2)t_3\ldots )t_n$. 

We consider each operation to have one argument. A function $f$ with $n$ arguments can be written as an operation that is applied $n$ times, i.e. instead of $f(x_1,x_2,\ldots, x_n)$ we use $(\ldots((f(x_1)x_2)\ldots )x_n)$.

For an external term $t$ we define a relation $t\simeq x$ by induction on the construction of $t$ as follows.

\begin{enumerate}
\item If $t$ is a constant or a variable of type $s$, then $t\simeq x\leftrightharpoons x=_{0s}t$.
\item If $t$ is $t_1t_2$, then $t\simeq x\leftrightharpoons \exists y,z(t_1\simeq y\wedge t_2\simeq z\wedge Ap(y,z,x))$.
\end{enumerate}

These are some more notations for external terms:
\begin{list}{•}{•}
\item $t\downarrow \leftrightharpoons \exists x(t\simeq x)$;
\medskip
\item $t\simeq\tau \leftrightharpoons \exists x(t\simeq x\wedge \tau\simeq x)$;
\medskip
\item $t\cong\tau \leftrightharpoons \forall x(t\simeq x\equiv \tau\simeq x)$;
\medskip
\item $\varphi(t) \leftrightharpoons \exists x(t\simeq x\wedge\varphi(x))$.
\end{list}

We fix G\"odel numbering of all expressions of the language $BT$. 

The theory $BT$ has the following axioms.

1. Intuitionistic predicate logic.
\medskip

2. Equality axioms

1) $x=_{00}x$.

2) $u=_{0k}X^k\wedge v=_{0k}X\wedge u=_{0n}Y^n\supset v=_{0n}Y (k\geqslant0,n\geqslant0)$.

3) $u=_{0\omega}m\wedge v=_{0\omega}m\supset u=_{00}v$.

4) $u=_{0\omega}m\wedge u=_{00}v \supset v=_{0\omega}m$.
\smallskip

5) $Ap(f,x,y)\wedge f=_{00}g\wedge x=_{00}u\wedge y=_{00}v\supset Ap(g,u,v)$.
\smallskip

6) $X^k\in_k Y^{k+1}\wedge X^k=_k U^k \wedge Y^{k+1}=_{k+1} Z^{k+1}\supset U\in_k Z (k\geqslant0)$.
\medskip

3. Combinatorial axioms

1) $Ap(f,x,y)\wedge Ap(f,x,z)\supset y=z$.

2) $\underline{k}xy \simeq x$.
\smallskip

3) $\underline{s}xy\downarrow $.\qquad
4) $\underline{s}xyz\cong xz(yz)$.
\smallskip

5) $\underline{p}xy\downarrow $.\qquad
6) $\neg(\underline{p}xy\simeq 0) $.
\smallskip

7) $\underline{p}_i x\downarrow(i=1,2) $.\qquad
8) $\underline{p}_i (\underline{p}x_1x_2)\simeq x_i(i=1,2) $.
\medskip

9) $\exists m(\underline{p}n0\simeq m) $.\qquad
10) $\exists Z^k(\underline{p}xY^k\simeq Z) $.
\smallskip

11) $n=m\supset\underline{d}xynm\simeq x$.\qquad
12) $n\neq m\supset\underline{d}xynm\simeq y$.
\smallskip

13) $\exists x(x=_{0\omega}n) $.\qquad
14) $\exists x(x=_{0k}Y^k) $.
\medskip

In $BT$ the successor of a natural number $n$ is given by $\underline{p}n0$. 
\medskip

Any natural number $m$ is represented by a term $\underbrace{\underline{p}(\ldots\underline{p}(\underline{p}}_m 0\underbrace{0)0\ldots)0}_m$, which we denote $\overline{m}$.

4. Induction over natural numbers

$\underline{\varphi[n/0], \quad\varphi\supset\exists m\left(\underline{p}n0\simeq m\wedge \varphi[n/m]\right) }$

$\qquad\qquad\varphi$
\smallskip
\\where $\varphi$ is any formula of $BT$.

Finite sequences are introduced in $BT$ using the pair operation:
\begin{list}{•}{•}
\item $\langle x_1\rangle \leftrightharpoons x_1$;
\item $\langle x_1,x_2,\ldots,x_{n+1}\rangle \leftrightharpoons \underline{p}(\langle x_1,x_2,\ldots,x_{n}\rangle)x_{n+1}$.
\end{list}

For brevity we will denote an external term $\tau(\langle t_1,t_2,\ldots,t_{n}\rangle)$ as \\$\tau( t_1,t_2,\ldots,t_{n})$.

5. Comprehension axiom
\[\exists U^{k+1}\left[\;\underline{c}_n(\widetilde{X})\simeq U\wedge\forall Z^k(Z\in_k U\equiv \varphi)
 \right],\]
where $n=\llcorner Z^k.\widetilde{X}.\varphi\lrcorner$, $\widetilde{X}$ is a finite list of variables of types $\leqslant (k+1)$, and $\varphi$ is a $(k+1)$-elementary formula with all its parameters in the list $Z^k, \widetilde{X}$.

This completes the definition of the theory $BT$.
We denote $BT_s$ the fragment of $BT$ containing only types not greater than $s$ $(s\geqslant0)$.

As usual in combinatorial logic, for any external term $t$ and variable $x$ we can construct an external term $\lambda x.t$ with the property:
\[BT\vdash \lambda x.t\downarrow \wedge (\lambda x.t)x\cong t.\]

Using $\lambda$-terms we can define in $BT_0$ recursion operator, $\mu$-operator and all primitive recursive functions. Thus, $BT_0$ contains the intuitionistic arithmetic $HA$.

\subsection{Constructive properties of $BT$}

In \cite{kach14a} we defined a realizability $fr\varphi$ for formulas of $BT$. In particular: \begin{equation}
fr(\psi\vee\chi) \leftrightharpoons
\exists k,u\left[f\simeq\langle k,u\rangle\wedge(k=0\supset ur\psi)\wedge(k\neq 0\supset ur\chi)
 \right].
\label{eq:disjunction1}
\end{equation}

The following lemma and theorem about the realizability were proven in \cite{kach14a}.

\begin{lemma}
$BT\vdash fr\varphi\supset\varphi$.
\label{lemma:realiz}
\end{lemma}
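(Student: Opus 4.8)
The plan is to argue by induction on the complexity of $\varphi$ (the number of logical symbols it contains), proving $BT\vdash fr\varphi\supset\varphi$ for all formulas at once, with $f$ free. Throughout I work inside $BT$, using that $BT_0$ contains $HA$, that the combinatorial constants provide $\lambda$-abstraction and application of partial recursive operations, and that the pairing constants $\underline{p},\underline{p}_1,\underline{p}_2$ let me decode a realizer $f$ into its components and evaluate $\{f\}(x)$. The atomic case is immediate: for atomic $\varphi$ (and for $\bot$) the definition of $fr$ sets $fr\varphi$ equal to $\varphi$, possibly conjoined with a totality condition on $f$, so $fr\varphi\supset\varphi$ holds at once, and since nothing realizes $\bot$ we get $fr\bot\supset\bot$ trivially.

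For the inductive step the four ``positive'' cases go through using the induction hypothesis. For conjunction I extract from $fr(\psi\wedge\chi)$ the two projections of $f$, which realize $\psi$ and $\chi$ respectively, and apply the hypothesis to each. For disjunction I use the displayed clause \eqref{eq:disjunction1}: it yields $k,u$ with $f\simeq\langle k,u\rangle$, and since equality of natural numbers is decidable in $BT$ I split into $k=0$, where $u\,r\,\psi$ gives $\psi$ by the hypothesis, and $k\neq 0$, where $u\,r\,\chi$ gives $\chi$; in either case $\psi\vee\chi$. For the existential quantifier I read off from $fr(\exists x\,\psi)$ the witness component $a$ of $f$ together with a realizer of $\psi[x/a]$, obtain $\psi[x/a]$ by the hypothesis, and conclude $\exists x\,\psi$. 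For the universal quantifier $fr(\forall x\,\psi)$ provides, for each $x$, that $\{f\}(x)$ realizes $\psi$, so the hypothesis applied uniformly in $x$ yields $\forall x\,\psi$; the same works for quantifiers over each type, using the corresponding type-correct decoding and application operations.

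The case I expect to be the real obstacle is implication, and with it negation $\neg\psi\leftrightharpoons\psi\supset\bot$. From $fr(\psi\supset\chi)$ and a hypothesis $\psi$ I want $\chi$, but the naive clause $\forall g\,(g\,r\,\psi\supset\{f\}g\,r\,\chi)$ would force me to produce a realizer of $\psi$ out of its bare truth, i.e. the converse direction of the very lemma I am proving, which is unavailable. This is exactly why the implication clause of the realizability of \cite{kach14a} must carry a truth component (realizability with truth for $\supset$), so that $fr(\psi\supset\chi)$ already entails $\psi\supset\chi$ and the case is read off directly; the negation case then specialises this with $\chi=\bot$. The remaining work is the routine but essential bookkeeping of checking that every decoding and application step above is genuinely $BT$-derivable, namely that the combinatorial and comprehension axioms really deliver the projections, the evaluation $\{f\}(x)$, and the type-correct witnesses invoked in each clause, so that the whole induction is formalisable in $BT$ rather than merely in the metatheory.
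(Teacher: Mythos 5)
The paper does not actually prove this lemma: it is imported verbatim from \cite{kach14a} with the remark that ``the following lemma and theorem about the realizability were proven in \cite{kach14a}'', and the only clause of the definition of $fr\varphi$ reproduced here is the disjunction clause \eqref{eq:disjunction1}. So there is no in-paper argument to compare yours against; what you have written is a reconstruction, and it is the standard and correct one for a realizability-with-truth interpretation: induction on complexity, atomic and $\bot$ cases immediate, the positive connectives and quantifiers handled by projecting the realizer and applying the induction hypothesis, with the decidability of $k=0$ in $HA\subseteq BT_0$ doing the case split for $\vee$.

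Your diagnosis of the implication case is the right one, and it is the only point of substance. With the naive clause $\forall g\,(g\,r\,\psi\supset fg\,r\,\chi)$ the lemma is simply not provable (this is the classical failure of ``realizability implies truth'' for plain Kleene-style realizability), so the definition in \cite{kach14a} must attach a truth conjunct to the $\supset$ clause (and the $\vee$ clause as displayed then needs no such conjunct, since the induction hypothesis recovers $\psi$ from $u\,r\,\psi$). You correctly observe that this is forced by the very statement of the lemma rather than verified against the definition, which you cannot see; that is an honest and unavoidable caveat, not a gap in your reasoning. Two small points worth keeping in mind if you were to write this out in full: in the existential case over set types the witness is an external term $t$, and $\psi(t)$ abbreviates $\exists x(t\simeq x\wedge\psi(x))$, so the induction hypothesis should be applied to $\psi(x)$ with a fresh variable before existential introduction; and in the universal case one needs the totality $\forall x\,(fx\downarrow)$ built into the clause, which you implicitly assume. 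Neither affects the correctness of the outline.
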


\begin{theorem} Soundness of the realizability.

If $BT\vdash\varphi$, then for some external term $t$, $BT\vdash t\downarrow\wedge \;tr\varphi$.
\label{theorem:realiz}
\end{theorem}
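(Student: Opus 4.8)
The plan is to argue by induction on the length of the derivation of $\varphi$ in $BT$, exhibiting at each step an external term that realizes the derived formula. I would first recall from \cite{kach14a} the remaining clauses of the definition of $fr\varphi$ — in particular that $fr(\psi\supset\chi)$ expresses, roughly, that $f$ is an operation sending every realizer of $\psi$ to a defined realizer of $\chi$; that $fr(\psi\wedge\chi)$ says $\underline{p}_1f$ realizes $\psi$ and $\underline{p}_2f$ realizes $\chi$; that $fr(\exists x\,\psi)$ codes a witness together with a realizer of the corresponding instance; that $fr(\forall x\,\psi)$ is an operation producing a realizer at each argument; and that on atomic formulas $fr\varphi$ reduces to $\varphi$ itself. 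With these clauses fixed, the soundness statement splits into a base case (every axiom of $BT$ has a realizer) and an inductive step (every rule of inference preserves the existence of a realizer).

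For the inductive step the key mechanism is the $\lambda$-calculus available in $BT_0$, together with the property $BT\vdash\lambda x.t\downarrow\wedge(\lambda x.t)x\cong t$. Modus ponens is handled by application: if $sr(\psi\supset\chi)$ and $tr\psi$, then by the implication clause $st\downarrow$ and $st\,r\,\chi$. Generalisation and the quantifier rules are handled by $\lambda$-abstraction, packaging a uniformly given family of realizers into a single operation. The axiom schemes of intuitionistic predicate logic are realized by explicit combinatory terms built from $\underline{k},\underline{s},\underline{p},\underline{p}_1,\underline{p}_2$: for instance $\lambda u.\lambda v.u$ realizes $\psi\supset(\chi\supset\psi)$, the pairing and projection constants realize the $\wedge$-rules, and the tagged pairs $\langle 0,u\rangle,\langle 1,u\rangle$ of \eqref{eq:disjunction1} realize the $\vee$-introduction axioms.

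For the non-logical axioms I would proceed by type. The equality and combinatorial axioms are essentially equations and $\downarrow$-statements between external terms; since realizability of an atomic formula reduces to its truth, each is realized either by a trivial constant term or, in the case of an existential such as $\underline{s}xy\downarrow$ or $\exists m(\underline{p}n0\simeq m)$, by a term supplying the witness guaranteed by the axiom itself, paired with a trivial realizer. The induction axiom is realized by primitive recursion: from realizers of $\varphi[n/0]$ and of the step formula I would define, using the recursion operator of $BT_0$, an operation $f$ with $f\,\overline{0}$ realizing the base and $f\,\overline{m+1}$ realizing $\varphi[n/\overline{m+1}]$, and then verify $\forall n\,(fn\,r\,\varphi)$ by an internal induction inside $BT$. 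For the comprehension axiom I would take the witness $U^{k+1}$ to be $\underline{c}_n(\widetilde{X})$ itself and reduce the realizer of $\forall Z^k(Z\in_k U\equiv\varphi)$ to a realizer of the biconditional, where the hypothesis that $\varphi$ is $(k+1)$-elementary is exactly what makes this possible.

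The main obstacle is the comprehension case, and more precisely the lemma underlying it: for a $(k+1)$-elementary $\varphi$ one must show that realizability and truth coincide up to a canonical realizer, so that the defining equivalence $Z\in_k U\equiv\varphi$ can be realized trivially once the comprehension axiom guarantees its truth. Establishing this self-realizing property requires a separate induction on the structure of $\varphi$ that exploits the absence of quantifiers over type $n$ and of the predicate $=_{0n}$ in $n$-elementary formulas; combined with Lemma \ref{lemma:realiz} it yields the required coincidence, after which both comprehension and the quantifier-heavy logical axioms go through smoothly. The induction axiom is the second most delicate point, since the internally defined recursion must be shown total and correct within $BT$ itself.
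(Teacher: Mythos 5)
The paper does not actually contain a proof of this theorem: it is quoted from \cite{kach14a} and only the citation is given, so there is no in-paper argument to match your proposal against clause by clause. Your overall strategy --- induction on the length of the derivation, combinators and $\lambda$-abstraction for the logical axioms and rules, the recursion operator for the induction schema --- is the standard soundness argument and is surely the shape of the proof in the cited paper; the base cases you sketch for the equality and combinatorial axioms and for modus ponens are fine.

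There is, however, a genuine gap at the point you yourself identify as the main obstacle. You propose to realize the comprehension axiom by taking $U=\underline{c}_n(\widetilde{X})$ and invoking a lemma that for $(k+1)$-elementary $\varphi$ ``realizability and truth coincide up to a canonical realizer.'' That lemma is not believable as stated. An $n$-elementary formula is only restricted in its \emph{highest} type: it may freely contain disjunctions and existential quantifiers over natural numbers and over variables of types $<n$. For such a formula the direction $\varphi\supset\exists e\,(e\,r\,\varphi)$ is essentially an internal existence/numerical-choice property and does not follow from the syntactic restrictions you cite (absence of type-$n$ quantifiers and of $=_{0n}$); Lemma \ref{lemma:realiz} gives you only the converse direction. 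The structural induction you gesture at would break exactly at the $\vee$ and $\exists$ clauses. The standard repair, which your outline is missing, is not to identify truth with realizability but to \emph{reinterpret the comprehension constant}: send $\underline{c}_n$ with $n=\llcorner Z^k.\widetilde{X}.\varphi\lrcorner$ to the comprehension constant for the set defined by the realizability predicate of $\varphi$ (or otherwise build the realizer into the membership relation), after checking that this defining formula is still $(k+1)$-elementary, and then verify the realizer of $\forall Z^k(Z\in_k U\equiv\varphi)$ against that reinterpreted set. Compare how the paper's negative interpretation handles the exactly analogous problem by sending $\underline{c}_n$ to $\underline{c}_{n^-}$ in Theorem \ref{theorem:BTcl-BT}. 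Without some such device the comprehension case of your induction does not go through.
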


\begin{theorem}Existence property of $BT$.
\[\textit{If }BT\vdash \exists Y\varphi\textit{, then for some external term }t,
BT\vdash \exists Y(t\simeq Y)\wedge\varphi[Y/t].\]
Here $Y$ is a variable of any type.
\label{theorem:existence}
\end{theorem}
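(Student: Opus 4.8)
The plan is to read the existence property off the soundness of the realizability. By Theorem~\ref{theorem:realiz} a proof of $\exists Y\varphi$ yields a realizing term, and the realizability clause for an existential quantifier---built on the same pattern as the disjunction clause (\ref{eq:disjunction1})---forces such a realizer to be (provably) a pair whose first component names a witness of the type of $Y$ and whose second component realizes the corresponding instance of $\varphi$. The combinatorial pairing constant $\underline{p}$ together with the projections $\underline{p}_1,\underline{p}_2$ then let me extract that witness as an external term, and Lemma~\ref{lemma:realiz} converts the second component into an honest proof of $\varphi$ at the witness.

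Concretely, assume $BT\vdash\exists Y\varphi$. By Theorem~\ref{theorem:realiz} there is an external term $t$ with $BT\vdash t\downarrow\wedge\;tr(\exists Y\varphi)$. Unfolding the realizability clause for $\exists Y$, from $tr(\exists Y\varphi)$ one obtains a first component $a$ and a realizer $u$ with $t\simeq\langle a,u\rangle$, where $a$ designates a witness of the type of $Y$ and $u$ realizes the instance of $\varphi$ determined by $a$. I put $t'\leftrightharpoons\underline{p}_1 t$. The combinatorial axioms 7) and 8), namely $\underline{p}_i x\downarrow$ and $\underline{p}_i(\underline{p}x_1x_2)\simeq x_i$, yield $\underline{p}_1 t\simeq a$ and $\underline{p}_2 t\simeq u$, so both projections converge to the intended components. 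From the witness part of the clause I then derive $BT\vdash\exists Y(t'\simeq Y)$, and from the second part $BT\vdash(\underline{p}_2 t)\,r\,\varphi[Y/t']$. Applying Lemma~\ref{lemma:realiz} to $\varphi[Y/t']$ gives $BT\vdash\varphi[Y/t']$, and combining the two facts produces
\[BT\vdash\exists Y(t'\simeq Y)\wedge\varphi[Y/t'],\]
so the external term $t'$ witnesses the conclusion.

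I expect the main obstacle to be the uniform handling of the witness across the several types. When $Y$ has type $\omega$ or type $0$ the first projection is directly the witness, but when $Y$ has a set type $k+1$ the realizer $t$ is still an operation, so its first component $t'$ is only an operation \emph{representing} a set through the predicate $=_{0,k+1}$; I must verify that the realizability clause for $\exists Y^{k+1}$ genuinely supplies a set $Y^{k+1}$ with $t'\simeq Y$, which is exactly where the pairing axiom 10) and the representation axiom 14) are needed. A secondary point is that the substitution $\varphi[Y/t']$ must be read through the convention $\varphi(t')\leftrightharpoons\exists x(t'\simeq x\wedge\varphi(x))$, so that it is an actual formula of $BT$ and Lemma~\ref{lemma:realiz} applies to it verbatim.
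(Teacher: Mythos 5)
Your proposal follows exactly the route the paper takes: the paper's own proof of Theorem~\ref{theorem:existence} consists of a single citation to the realizability argument of \cite{kach14a}, and your sketch is the standard reconstruction of that argument (soundness of realizability via Theorem~\ref{theorem:realiz}, the first projection $\underline{p}_1 t$ of the realizer as the witnessing external term, and Lemma~\ref{lemma:realiz} to convert the second component into a proof of the instance). The caveat you flag yourself --- that the realizability clause for $\exists Y^{k}$ must deliver the witness through $=_{0k}$ when $Y$ has a set type --- is the only point requiring details not reproduced in this paper, and those are supplied in the cited reference.
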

\begin{proof}
A proof using the realizability was given in \cite{kach14a}.
\end{proof}

Next we will show that $BT$ also has disjunction property. 

In \cite{kach14a} we constructed for each fragment $BT_s$ a set-theoretical model with domains, which are sets of external terms. In particular, the domain for numerical variables is $\mathcal{H}=\{\bar{m}\mid m\in \mathbb{N}
\}$. 
\medskip

The notation $\gamma\models^p\varphi$ means that in the model for $BT_{p-1}$ formula $\varphi$  holds under evaluation $\gamma$. 

\begin{theorem} Soundness of the model. 
\[BT_{p-1}\vdash \varphi\;\Rightarrow\;(\gamma\models^p \overline{\overline{\varphi}}),\] where $p\geqslant 1$ and $\gamma$ is the empty evaluation.
\label{theorem:set-model}
\end{theorem}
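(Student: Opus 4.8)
The plan is to argue by induction on the length of a derivation of $\varphi$ in $BT_{p-1}$. Since $\varphi$ may contain free parameters while the displayed conclusion concerns the closure $\overline{\overline{\varphi}}$ under the empty evaluation $\gamma$, the convenient statement to carry through the induction is the open form: for every formula $\psi$ occurring in the derivation and every evaluation $\delta$ assigning elements of the model $\mathcal{M}_{p-1}$ to the free variables of $\psi$, one has $\delta\models^p\psi$. Once this is established the theorem follows at once, because $\gamma\models^p\overline{\overline{\varphi}}$ is by definition the assertion that $\delta\models^p\varphi$ for all evaluations $\delta$. Throughout, $\models^p$ is the satisfaction relation in the set-theoretical structure $\mathcal{M}_{p-1}$ built in \cite{kach14a}, whose numerical domain is $\mathcal{H}=\{\bar m\mid m\in\mathbb{N}\}$ and whose type-$k$ domains consist of external terms.

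First I would dispose of the logical part. The axioms and inference rules of intuitionistic predicate logic are sound with respect to a classical two-valued structure, since intuitionistic logic is contained in classical logic; hence every logical axiom holds under every $\delta$, and modus ponens together with the quantifier rules preserve ``holds under every evaluation''. The induction schema of $BT_{p-1}$ is stated as a rule with premises $\varphi[n/0]$ and $\varphi\supset\exists m(\underline{p}n0\simeq m\wedge\varphi[n/m])$ and conclusion $\varphi$; here I would use that the interpretation of $\underline{p}n0$ is the successor operation on $\mathcal{H}$ and that every element of $\mathcal{H}$ is $\bar m$ for some $m\in\mathbb{N}$, so that validity of the two premises under all evaluations yields validity of the conclusion by ordinary induction on $m$.

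The substantive work is checking the non-logical axioms in $\mathcal{M}_{p-1}$. The equality axioms hold because each predicate $=_{0k}$, $=_{0\omega}$ and $\in_k$ is interpreted as a relation respecting the congruences of the model, so that substitution of equals is validated. The combinatorial axioms hold because the operation domain of $\mathcal{M}_{p-1}$ is closed under the application structure on external terms and interprets the combinators $\underline{k},\underline{s},\underline{p},\underline{p}_1,\underline{p}_2,\underline{d}$ in the intended way, so that each identity such as $\underline{s}xyz\cong xz(yz)$ and each totality or closure claim of the form $t\downarrow$ is verified directly from the definitions of $\simeq$ and of application in the model, the functionality of $Ap$ being immediate.

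The main obstacle is the comprehension axiom, which is exactly where the predicative restriction does its work. Given a $(k+1)$-elementary formula $\varphi$ with parameters among $Z^k,\widetilde X$ and $n=\llcorner Z^k.\widetilde X.\varphi\lrcorner$, I must show that for any evaluation of $\widetilde X$ there is an element $U$ of the type-$(k+1)$ domain with $\underline{c}_n(\widetilde X)\simeq U$ and $Z\in_k U\equiv\varphi$ for every $Z$ of type $k$. The point is that, because $\varphi$ contains no quantifier over type $k$ and no predicate $=_{0k}$, its truth value in $\mathcal{M}_{p-1}$ under a given evaluation is determined by data of types strictly below $k+1$, and is therefore already fixed before the type-$(k+1)$ domain is assembled; consequently the extension $\{Z^k\mid\varphi\}$ is one of the subsets of the type-$k$ domain that the construction of \cite{kach14a} deliberately places into the type-$(k+1)$ domain, named there by $\underline{c}_n$ applied to the parameters. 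Carrying this out requires unwinding the definition of the model's type hierarchy and matching the G\"odel index $n$ with the slot assigned to $\underline{c}_n(\widetilde X)$; this is the step where the $(k+1)$-elementarity hypothesis is indispensable, since without it the defining formula could refer to the very domain being built and the needed set might fail to exist at that stage.
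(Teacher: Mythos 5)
The paper gives no proof of this theorem at all --- it only cites \cite{kach14a} --- so there is nothing in-paper to measure your proposal against; your outline is the canonical soundness argument (induction on the length of derivations, carrying the open statement ``valid under every evaluation'' through the induction and specializing to the empty evaluation of the closure at the end), which is surely the shape of the proof in the cited reference. You also correctly isolate the comprehension axiom as the only non-routine case and correctly locate the role of $(k+1)$-elementarity: the truth value of the defining formula is settled by the domains of types $\leqslant k$, so its extension is available when the type-$(k+1)$ domain is assembled. Two reservations keep this from being a complete proof. First, the comprehension case as you present it is a description of why an argument should exist rather than the argument itself: since the domains consist of external terms, ``placing the extension into the domain'' really means taking the term $\underline{c}_n(\widetilde{X})$ (evaluated at the given parameters) as the witness and verifying that the model's interpretation of $\in_k$ on it agrees with $\varphi$ and is well defined when distinct such terms are identified by the model; none of this can be carried out without the actual definition of the type hierarchy, which neither you nor the paper reproduces. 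Second, you silently assume the model is a classical two-valued structure when disposing of the logical axioms and rules; that is consistent with how the paper uses $\models^p$ (e.g.\ in Lemma \ref{lemma:term-reduction2} and the disjunction-property proof), but it is an unverified assumption about the construction in \cite{kach14a} rather than something your argument establishes.
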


\begin{proof}
Proof was given in \cite{kach14a}.
\end{proof}

For external terms $t$ and $\tau$ the notation $t\succeq \tau$ means that $t$ can be reduced to $\tau$ using the properties of operation constants in axioms 3.2), 4), 8), 11) and 12). The following two lemmas were proven in \cite{kach14a}. 

\begin{lemma}
$t\succeq \tau\quad\Rightarrow \quad BT\vdash t\downarrow\;\supset (t\simeq \tau).$
\label{lemma:term-reduction1}
\end{lemma}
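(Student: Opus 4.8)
The plan is to argue by induction on the generation of the reduction relation $\succeq$, understood as the reflexive, transitive and compatible (congruence) closure of the one-step contractions licensed by axioms 3.2), 4), 8), 11) and 12). Two facts about $BT$ will be used throughout, both provable by a routine induction on the structure of an external term. First, \emph{definedness is hereditary} (application is strict): $BT\vdash (t_1t_2)\downarrow\;\supset\;t_1\downarrow\wedge t_2\downarrow$, which is immediate from the clause $t_1t_2\simeq x\leftrightharpoons\exists y,z(t_1\simeq y\wedge t_2\simeq z\wedge Ap(y,z,x))$. Second, \emph{evaluation is single-valued}: $BT\vdash t\simeq x\wedge t\simeq y\supset x=y$, which follows from the determinacy axiom 3.1) together with the equality axioms by induction on $t$. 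From single-valuedness one obtains that $\simeq$ is symmetric and transitive on defined terms and that it is a congruence for application, i.e. $BT\vdash (t_1t_2)\downarrow\wedge t_1\simeq\tau_1\wedge t_2\simeq\tau_2\supset t_1t_2\simeq\tau_1\tau_2$; these are the glue lemmas for the induction.

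\emph{Base case (reflexivity).} Here $\tau$ is $t$, and $t\downarrow\supset t\simeq t$ holds because, after unfolding definitions, both $t\downarrow$ and $t\simeq t$ abbreviate $\exists x(t\simeq x)$. \emph{Base case (one contraction).} Suppose $t$ is an instance of the left-hand side of one of the listed axioms and $\tau$ the corresponding right-hand side; for the $\underline{d}$-rules assume the numeric side condition ($n=m$, resp. $n\neq m$) holds. I would treat all five uniformly: assuming $t\downarrow$, heredity gives that every immediate subterm of the redex is defined, say with values $a,b,\ldots$; the relevant combinatory axiom, instantiated at these values, yields the required equality of values (for 3.2), 8), 11), 12) directly, and for 3.4) via the $\cong$-equivalence, whose left-to-right direction transports the value of $t$ to $\tau$); finally the congruence lemma lifts this back to $t\simeq\tau$. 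For instance, for $t=\underline{k}t_1t_2\succeq t_1$: from $t\downarrow$ we get $t_1\simeq a$ and $t_2\simeq b$, and axiom 3.2) gives $\underline{k}ab\simeq a$, whence $t\simeq\underline{k}ab\simeq a\simeq t_1$.

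\emph{Inductive step (transitivity).} If $t\succeq t'\succeq\tau$ with the claim known for each factor, then assuming $t\downarrow$ the first factor gives $t\simeq t'$, hence $t'\downarrow$; the second factor gives $t'\simeq\tau$; and transitivity of $\simeq$ yields $t\simeq\tau$. \emph{Inductive step (compatibility).} If the contraction occurs inside a subterm, say $t=s\,t_0\succeq s\,\tau_0=\tau$ because $t_0\succeq\tau_0$, then assuming $t\downarrow$ heredity gives $t_0\downarrow$, the induction hypothesis gives $t_0\simeq\tau_0$, and the congruence lemma gives $s\,t_0\simeq s\,\tau_0$; the case $t=t_0\,s$ is symmetric.

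The only real obstacle is the disciplined handling of partiality: because the combinatory axioms speak about total values whereas external terms may fail to be defined, every step must be guarded by a definedness hypothesis and must exploit strictness of application to descend into subterms and single-valuedness to climb back out. Once the heredity, single-valuedness and congruence lemmas for $\simeq$ are in place, the induction itself is mechanical.
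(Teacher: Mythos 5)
The paper does not actually present a proof of this lemma — it is quoted as one of "the following two lemmas \ldots proven in \cite{kach14a}" — so there is nothing in this source to compare against line by line. Your argument is nevertheless correct and is the standard one for such a statement: induction on the generation of $\succeq$ as the reflexive--transitive--compatible closure of the one-step contractions, with strictness of application ($t_1t_2\downarrow\supset t_1\downarrow\wedge t_2\downarrow$), single-valuedness of $\simeq$ (from axiom 3.1) together with the equality axioms 2.1)--2.5), which do yield $u=_{0k}X\wedge v=_{0k}X\supset u=_{00}v$ by instantiating 2.2) at $n=0$, $Y^0:=u$), and the resulting congruence lemma supplying exactly the glue needed to guard every step by a definedness hypothesis. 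This matches what the cited proof must do, and your explicit attention to partiality — descending by strictness and climbing back by single-valuedness — is the right emphasis.
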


\begin{lemma}
$\gamma\models^p (t\simeq x)\quad\Leftrightarrow\quad t'\succeq \gamma(x),$
\medskip
\\where $t'$ is the external term  $t$ evaluated by $\gamma$.
\label{lemma:term-reduction2}
\end{lemma}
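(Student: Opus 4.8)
The plan is to prove both directions of the equivalence simultaneously by induction on the structure of the external term $t$, following the recursive definition of the relation $t\simeq x$. Throughout I will use the features of the term model of \cite{kach14a}: each evaluation $\gamma$ sends a variable to a closed external term lying in the appropriate domain (for type $\omega$, a numeral $\bar m\in\mathcal H$); these domain elements are $\succeq$-normal; and the interpretation of $Ap(a,b,c)$ in the model is exactly the statement $ab\succeq c$. I will also use the basic structural facts about $\succeq$ coming from its generation by the reduction rules in axioms 3.2), 4), 8), 11), 12), namely that it is reflexive, transitive, and a congruence for application (so that $t_1'\succeq a$ and $t_2'\succeq b$ imply $t_1't_2'\succeq ab$).

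For the base case, suppose $t$ is a constant or a variable of type $s$. Then by definition $t\simeq x$ abbreviates $x=_{0s}t$, and $t'$ is either the constant itself or the value $\gamma(t)$, in both cases a $\succeq$-normal element of the domain. The model interprets $=_{0s}$ so that $\gamma\models^p(x=_{0s}t)$ holds precisely when $\gamma(x)$ and $t'$ are the same domain element; since both are normal, this coincides with $t'\succeq\gamma(x)$ via reflexivity. This settles the base case in both directions.

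For the inductive step, let $t$ be $t_1t_2$, so that $t\simeq x$ abbreviates $\exists y,z\,(t_1\simeq y\wedge t_2\simeq z\wedge Ap(y,z,x))$ and $t'=t_1't_2'$. For the forward direction, unfolding $\gamma\models^p(t_1t_2\simeq x)$ produces domain elements $a,b$ with $\gamma[y{:=}a]\models^p t_1\simeq y$, $\gamma[z{:=}b]\models^p t_2\simeq z$, and $Ap(a,b,\gamma(x))$ holding in the model. The induction hypothesis gives $t_1'\succeq a$ and $t_2'\succeq b$, while $Ap(a,b,\gamma(x))$ unwinds to $ab\succeq\gamma(x)$; congruence and transitivity of $\succeq$ then yield $t_1't_2'\succeq ab\succeq\gamma(x)$, i.e.\ $t'\succeq\gamma(x)$. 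For the backward direction I take $a,b$ to be the normal values of $t_1',t_2'$, apply the induction hypothesis to obtain the two conjuncts $t_1\simeq y$ and $t_2\simeq z$ in the model, and combine $t_1't_2'\succeq\gamma(x)$ with $t_1'\succeq a$, $t_2'\succeq b$ to get $ab\succeq\gamma(x)$, which is exactly $Ap(a,b,\gamma(x))$ in the model; this reassembles the existential statement.

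I expect the main obstacle to be the backward direction of the inductive step: extracting well-defined canonical values $a,b$ of the subterms $t_1',t_2'$ and then pulling the reduction $t_1't_2'\succeq\gamma(x)$ apart as $ab\succeq\gamma(x)$. This requires that reduction on closed terms be functional, so that a term has at most one normal value and a head redex can be analysed independently of the reductions performed inside $t_1'$ and $t_2'$ — a property that must be read off from the confluence of the combinatorial reduction system underlying $\succeq$ in \cite{kach14a}. Matching the model's interpretation of $Ap$ with $\succeq$ at each combinatorial constant (the rules for $\underline{k},\underline{s},\underline{p}_i,\underline{d}$) is the other point demanding care.
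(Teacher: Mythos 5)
The paper does not prove this lemma itself --- it is quoted from \cite{kach14a} --- so there is no in-paper argument to compare against; I can only assess your proposal on its merits. The overall shape (induction on the structure of $t$, mirroring the recursive definition of $t\simeq x$, with the base case handled by normality of domain elements and the application case by unfolding the existential quantifier) is surely the right one and is presumably what \cite{kach14a} does.

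The genuine gap is in the backward direction of the inductive step, and you have located the right step but misdiagnosed why it is delicate. You take $a,b$ to be ``the normal values of $t_1',t_2'$'' and say the property needed to justify this is confluence. Confluence gives at most \emph{uniqueness} of normal values; what you actually need is their \emph{existence}: from $t_1't_2'\succeq\gamma(x)$ alone it does not follow, for an unrestricted reduction relation generated by axioms 3.2), 4), 8), 11), 12), that $t_1'$ and $t_2'$ each reduce to domain elements. The rule for $\underline{k}$ (and likewise $\underline{d}$) discards an argument, so for instance $\underline{k}\,\overline{0}\,\tau\succeq\overline{0}$ holds even when $\tau$ is a closed term with no value, whereas $\gamma\models^p(\underline{k}\,\overline{0}\,\tau\simeq x)$ requires, by the recursive unfolding of $\simeq$, that $\tau$ denote some domain element. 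Under a naive (call-by-name) reading of $\succeq$ the right-to-left implication is therefore false, and the lemma can only hold because $\succeq$ in \cite{kach14a} is a call-by-value--style reduction (combinator rules fire only on already evaluated arguments), or because one proves a separate decomposition lemma: if $t_1't_2'$ reduces to a value $\gamma(x)$, then $t_1'$ and $t_2'$ reduce to values $a,b$ with $ab\succeq\gamma(x)$. Your proof needs to state and use such a lemma explicitly; citing confluence does not supply it. The remaining assumptions you lean on --- that the model interprets $Ap(a,b,c)$ as $ab\succeq c$, that it interprets $=_{0s}$ as identity of normal domain elements, and that $\succeq$ is a congruence for application --- are plausible features of the term model but likewise have to be read off from \cite{kach14a} rather than from anything in this paper.
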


\begin{theorem}Disjunction property of $BT$. For closed formulas $\varphi$ and $\psi:$
\[\textit{if }BT\vdash \varphi\vee\psi\textit{, then }
BT\vdash \varphi\textit{ or }BT\vdash\psi.\]
\label{theorem:disjunction}
\end{theorem}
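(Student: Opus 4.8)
The plan is to extract a closed realizing term from the assumed proof of $\varphi\vee\psi$ and then let the set-theoretical model decide its numerical ``tag''. First, since $BT\vdash\varphi\vee\psi$, soundness of the realizability (Theorem~\ref{theorem:realiz}) supplies an external term $t$ with $BT\vdash t\downarrow\wedge\,tr(\varphi\vee\psi)$; because $\varphi$ and $\psi$ are closed, this $t$ may be taken closed. Unfolding the realizability clause (\ref{eq:disjunction1}) for disjunction gives
\[
BT\vdash\exists k,u\bigl[\,t\simeq\langle k,u\rangle\wedge(k=0\supset ur\varphi)\wedge(k\neq0\supset ur\psi)\bigr].
\]
Thus $t$ provably reduces to a pair, and its first projection $\underline{p}_1 t$ is a closed numerical external term with $BT\vdash(\underline{p}_1 t)\downarrow$, while $BT\vdash\underline{p}_1 t\simeq k$ and $BT\vdash\underline{p}_2 t\simeq u$ by combinatorial axiom 3.8).

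The key step is to determine, provably in $BT$, whether this tag is $0$ or not. For this I would pass to the set-theoretical model for $BT_{p-1}$, choosing $p$ large enough that $t$ and the formulas involved use only types $<p$ (so that $BT_{p-1}\vdash(\underline{p}_1 t)\downarrow$). By soundness of the model (Theorem~\ref{theorem:set-model}) applied to $(\underline{p}_1 t)\downarrow$, under the empty evaluation $\gamma$ the term $\underline{p}_1 t$ takes a value in the numerical domain $\mathcal{H}=\{\bar m\mid m\in\mathbb{N}\}$, say the value $\bar m$. Since $t$ is closed it coincides with its own evaluation, so Lemma~\ref{lemma:term-reduction2} yields $\underline{p}_1 t\succeq\bar m$, and then Lemma~\ref{lemma:term-reduction1} gives $BT\vdash(\underline{p}_1 t)\downarrow\supset(\underline{p}_1 t\simeq\bar m)$. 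Combined with $BT\vdash(\underline{p}_1 t)\downarrow$, this pins down a definite numeral: $BT\vdash\underline{p}_1 t\simeq\bar m$, that is, $BT\vdash k\simeq\bar m$.

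It remains to split on the externally decidable value of $m$. If $m=0$, then $\bar m$ is $0$, so $BT\vdash k=0$, and the second conjunct of the realizability statement gives $BT\vdash(\underline{p}_2 t)r\varphi$; Lemma~\ref{lemma:realiz} then yields $BT\vdash\varphi$. If $m\neq0$, then $\bar m$ has the form $\underline{p}(\ldots)0$, whence combinatorial axiom 3.6) gives $BT\vdash\neg(\bar m\simeq 0)$ and so $BT\vdash k\neq0$; the third conjunct gives $BT\vdash(\underline{p}_2 t)r\psi$, and Lemma~\ref{lemma:realiz} yields $BT\vdash\psi$. In either case the required conclusion holds.

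The main obstacle is precisely this middle step: realizability soundness alone tells us only that \emph{some} number tags the realized disjunct, not which one, and intuitionistically one cannot case-split on $k=0\vee k\neq0$ inside $BT$ itself. The device that resolves this is the set-theoretical model, whose numerical domain consists exactly of the standard numerals, together with the reduction Lemmas~\ref{lemma:term-reduction1} and~\ref{lemma:term-reduction2}, which convert the semantic value of the closed term $\underline{p}_1 t$ back into the syntactic $BT$-provable identity $k\simeq\bar m$. The only point requiring care is to check that a single fragment $BT_{p-1}$ simultaneously interprets the realizer, proves $(\underline{p}_1 t)\downarrow$, and validates the reduction lemmas; since $t$ is a fixed term using finitely many types, choosing $p$ above the largest type occurring in $t$ suffices, and no essential difficulty arises.
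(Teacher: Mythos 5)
Your proposal follows essentially the same route as the paper's proof: extract a realizer via Theorem~\ref{theorem:realiz}, close it off, project out the numerical tag with $\underline{p}_1$, use the set-theoretical model together with Lemmas~\ref{lemma:term-reduction1} and~\ref{lemma:term-reduction2} to pin that tag down to a standard numeral $\overline{m}$, and then case-split externally on $m$. The only points to tidy are that the realizing term is made closed by explicitly substituting constants for its parameters (Theorem~\ref{theorem:realiz} does not by itself yield a closed term, and closedness of $\varphi\vee\psi$ is not the reason), and that soundness of the model should be applied to the numerical statement $\exists k(\underline{p}_1 t\simeq k)$ supplied by the realizability clause rather than to $(\underline{p}_1 t)\downarrow$, since only the former forces the value into the numerical domain $\mathcal{H}$.
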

\begin{proof}
Suppose $BT\vdash \varphi\vee\psi$.  By Theorem \ref{theorem:realiz} for some external term $\tau$, $BT\vdash\tau\downarrow\wedge\tau r(\varphi\vee\psi)$. Denote $\tau'$ the closed term obtained from $\tau$ by substituting appropriate constants for all parameters of $\tau$ (that is, 0 for numerical parameters, $\underline{k}$ for operation parameters and $\underline{c}_{\llcorner Z^j.Z\neq Z\lrcorner}$ for parameters of type $j\geqslant 1$). Then $BT\vdash \tau'\downarrow\wedge\tau'r(\varphi\vee\psi)$. 
\medskip

Denote $t=\underline{p}_1\tau'$. By (\ref{eq:disjunction1}), $BT\vdash\exists k(t\simeq k)$ and by Lemma \ref{lemma:realiz}:
\begin{equation}
BT\vdash(t=0\supset\varphi)\wedge(t\neq 0\supset\psi).
\label{eq:disjunction2}
\end{equation}

Since any proof in $BT$ is finite, there is $p\geqslant 1$ such that 
\[BT_{p-1}\vdash\exists k(t\simeq k).\]

By Theorem \ref{theorem:set-model}, $\gamma\models^p \exists k(t\simeq k)$ for the empty evaluation $\gamma$ and by Lemma \ref{lemma:term-reduction2}, $(\exists r\in \mathcal{H})(t\succeq r)$. Therefore $t\succeq\overline{m}$ for some natural number $m$. By Lemma \ref{lemma:term-reduction1}, $BT\vdash t\simeq \overline{m}$. 
\smallskip

So if $m=0$, then $BT\vdash t=0$ and by (\ref{eq:disjunction2}) $BT\vdash\varphi$. 
\smallskip

If $m\neq0$, then $BT\vdash t\neq0$ and by (\ref{eq:disjunction2}) $BT\vdash\psi$. 
\end{proof}

In \cite{kash84} we showed that $BT$ is consistent with the following form of the formal Church thesis:
\[(\forall f\in \mathbb{N}^\mathbb{N})\exists e\forall n(fn\simeq \{e\}(n)).\]

The existence and disjunction properties of $BT$ as well as its consistency with the formal Church thesis are all evidence of the constructive nature of $BT$. 

\section{Multi-sorted second-order arithmetic $SA$}
\label{section:SA}

\subsection{Definition of theory $SA$}

The language of theory \textit{SA}  has the following variables:
\begin{list}{}{}
\item $n_1, n_2, \ldots,m,n, \ldots$ over natural numbers and 
\item $x_1^{(k)}, x_2^{(k)},\ldots, x^{(k)},y^{(k)},\ldots$ of sort $k$ over sets of natural numbers $(k=1,2,\ldots)$.
\end{list}

The language of $SA$ has two numerical constants 0 and 1, and functional symbols $\cdot$ and +. There are the following predicate  symbols: 

$=$ (equality of natural numbers) and $\in_k$ $(k=1,2,\ldots)$.
\medskip

Numerical terms are constructed from numerical variables and  constants using functional symbols. 
Atomic formulas are:

$t=\tau$; $t\in_k x^{(k)}$, where $t$ and $\tau$ are numerical terms.
\smallskip

Formulas are constructed from atomic formulas and $\bot$ using logical connectives and quantifiers.
The language of $SA$ is defined.

A formula $\varphi$ of $SA$ is called $k$\textit{-simple} if it has no quantifiers over set variables and it has no variables of sorts greater than $k$.

Equality of sets is introduced as an abbreviation:
\[x^{(k)}=_k y^{(k)}\leftrightharpoons \forall n(n\in_k x\equiv n\in_k y).\]

For brevity we will often omit indices in $=_k$ and $\in_k$.
We fix a standard numbering of pairs of natural numbers and denote $(m,n)$ the number of pair $m,n$ in this numbering.

Axiomatic theory $SA$ has the following axioms.

1. Classical predicate logic with equality.

1. Peano axioms.

$\quad \neg(n+1= 0).$

$\quad n+1=m+1\supset n=m.$

$\quad n+0=n.$

$\quad n+(m+1)=(n+m)+1.$

$\quad n\cdot0=0.$

$\quad n\cdot (m+1)=n\cdot m+n.$

3. Induction axiom.
$\quad \varphi(0)\wedge\forall n[\varphi(n)\supset\varphi(n+1)]\supset\forall n\varphi(n),$
\\where $\varphi$ is any formula of $SA$.

4. Comprehension axiom.
$\quad \exists z^{(k)}\forall n(n\in z\equiv\varphi(n)),$
\\where $\varphi$ is a $k$-simple formula not containing the variable $z^{(k)}$.

5. Choice axiom.
\[\quad \forall n\exists! x^{(k)}
\varphi(n,x)\supset \exists y^{(k+1)}\forall n\exists x^{(k)}[\varphi(n,x)\wedge \forall m(m\in x\equiv (n,m)\in y)],\]
where $\varphi$ is a $k$-simple formula.

This completes the definition of the theory $SA$. For $s\geqslant 0$ we denote $SA_s$ the fragment of $SA$ containing only sorts not greater than $s$. Thus, $SA_0$ is the Peano arithmetic $PA$.

In the rest of this section we compare $SA$ with some standard second-order arithmetics.

\subsection{Predicative second-order arithmetic $Ar$}

The language of theory $Ar$ has variables:
\begin{list}{}{}
\item $n_1, n_2, \ldots,m,n, \ldots$ over natural numbers and 
\item $x_1, x_2,\ldots, x,y,\ldots$ over sets of natural numbers.
\end{list}

The language of $Ar$ has two numerical constants 0 and 1, and functional symbols $\cdot$ and +. There are two predicate  symbols: = (equality of natural numbers) and $\in$.

Numerical terms are constructed from numerical variables and  constants using functional symbols.
Atomic formulas are:

$t=\tau$; $t\in x$, where $t$ and $\tau$ are numerical terms.

Formulas are constructed from atomic formulas and $\bot$ using logical connectives and quantifiers.

Axiomatic theory $Ar$ has the following axioms.

1. Classical predicate logic with equality.

1. Peano axioms (the same as in $SA$).

3. Induction axiom.
$\quad \varphi(0)\wedge\forall n[\varphi(n)\supset\varphi(n+1)]\supset\forall n\varphi(n),$
\\where $\varphi$ is any formula of $Ar$.

4. Comprehension axiom.
$\quad \exists z\forall n(n\in z\equiv\varphi(n)),$
where formula $\varphi$ has no quantifiers over set variables and does not contain the variable $z$.

This completes the definition of the theory $Ar$.

\subsection{Interpretations of $SA$ in weak second-order arithmetics}

Clearly, $SA$ is proof-theoretically stronger than $Ar$. However, $SA$ can be interpreted in some extensions of $Ar$.

Equality of sets is introduced in $Ar$ as an abbreviation:
\[x= y\leftrightharpoons \forall n(n\in x\equiv n\in y).\]

We define the following two formulas in the language of $Ar$.
\smallskip

$(AC!)\quad \forall k\exists! x\varphi(k,x)\supset\exists y\forall k\exists x[\varphi(k,x)\wedge \forall m(m\in x\equiv (k,m)\in y)]
,$
\smallskip
\\
where $\varphi$ has no quantifiers over set variables and does not contain the variable $y$.
\smallskip

$(\Delta^1_1-C)\quad \forall n[\forall v\varphi(n,v)\equiv
\exists u\psi(n,u)]
\supset\exists z\forall n[n\in z\equiv \exists u\psi(n,u)],$
\smallskip
\\where formulas $\varphi$ and $\psi$ have no quantifiers over set variables and do not contain the variable $z$.

\begin{theorem}
\begin{enumerate}

\item The theory $SA$ is interpretable in $Ar+(AC!)$.
\item The theory  $Ar+(AC!)$ is a sub-theory of $Ar+(\Delta^1_1-C)$. 
\end{enumerate}
\label{theorem:SA_int}
\end{theorem}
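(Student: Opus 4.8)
The plan is to interpret $SA$ in $Ar+(AC!)$ via the evident \emph{sort-collapsing} translation $(\cdot)^\ast$: every numerical variable is kept, every set variable $x^{(k)}$ of any sort $k$ is sent to a single set variable of $Ar$, each $\in_k$ is sent to $\in$, and the numerical apparatus $(0,1,+,\cdot,=)$ is preserved, with connectives and quantifiers commuting with $(\cdot)^\ast$. One then checks that the translation of each axiom of $SA$ is provable in $Ar+(AC!)$. The logical and equality axioms, the Peano axioms and the induction scheme present no difficulty, since $\varphi^\ast$ is a legitimate $Ar$-formula whenever $\varphi$ is an $SA$-formula and $Ar$ has full induction for all its formulas. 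The crucial observation is that a $k$-simple formula contains \emph{no set quantifiers}, so its translation $\varphi^\ast$ likewise has none. Consequently the translation of the comprehension axiom of $SA$ is exactly an instance of the arithmetical comprehension of $Ar$, and the translation of the choice axiom of $SA$ (for $k$-simple $\varphi$) is, after renaming the numerical index, precisely an instance of $(AC!)$. This yields part (1).

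\textbf{Part (2).} Since $Ar+(AC!)$ and $Ar+(\Delta^1_1-C)$ share all axioms of $Ar$, it suffices to derive each instance of $(AC!)$ in $Ar+(\Delta^1_1-C)$. Assume the antecedent $\forall k\,\exists! x\,\varphi(k,x)$, where $\varphi$ has no set quantifiers. The set $y$ we seek should satisfy $(k,m)\in y\equiv m\in x_k$, where $x_k$ denotes the unique set with $\varphi(k,x_k)$. Under the uniqueness hypothesis this condition is $\Delta^1_1$: for a pair $p=(k,m)$ one has
\[
\exists u\,[\,\varphi(k,u)\wedge m\in u\,]\;\equiv\;\forall v\,[\,\varphi(k,v)\supset m\in v\,].
\]
The right-to-left implication uses $\exists x\,\varphi(k,x)$, a consequence of the antecedent, while the left-to-right implication uses uniqueness together with extensionality of $=$. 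Setting $\tilde\psi(p,u)\leftrightharpoons\exists k,m\,[\,p=(k,m)\wedge\varphi(k,u)\wedge m\in u\,]$ and $\tilde\varphi(p,v)\leftrightharpoons\forall k,m\,[\,p=(k,m)\supset(\varphi(k,v)\supset m\in v)\,]$, both free of set quantifiers, the displayed equivalence reads $\forall p\,[\,\forall v\,\tilde\varphi(p,v)\equiv\exists u\,\tilde\psi(p,u)\,]$, so $(\Delta^1_1-C)$ applies and produces a set $z$ with $p\in z\equiv\exists u\,\tilde\psi(p,u)$ for all $p$.

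It remains to verify that $z$ witnesses the succedent of $(AC!)$. Fixing $k$ and taking $x=x_k$, one computes $(k,m)\in z\equiv\exists u\,[\,\varphi(k,u)\wedge m\in u\,]\equiv m\in x_k$, the last step again by uniqueness; hence $\varphi(k,x_k)\wedge\forall m(m\in x_k\equiv(k,m)\in z)$, which is exactly what $(AC!)$ demands. This completes the derivation and hence part (2).

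The routine content is the axiom-by-axiom bookkeeping of part (1). The one genuinely substantive point, which I expect to require the most care, is the $\Delta^1_1$ equivalence in part (2): its right-to-left direction rests on the existence half and its left-to-right direction on the uniqueness half of $\forall k\,\exists! x\,\varphi(k,x)$, the latter invoked through the extensional equality of sets.
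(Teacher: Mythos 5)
Your proposal is correct and follows essentially the same route as the paper: part (1) is the same sort-collapsing translation (the paper makes the renaming explicit by sending $x_i^{(k)}$ to $x_{(k,i)}$, which you should do too so that distinct variables stay distinct), and part (2) derives $(AC!)$ from $(\Delta^1_1-C)$ via the same $\Sigma^1_1/\Pi^1_1$ pair of set-quantifier-free matrices, with existence giving one direction of the equivalence and uniqueness (via extensional set equality) the other. No substantive differences.
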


\begin{proof}
1. For a formula $\varphi$ of $SA$ we define its interpretation $\varphi^\wedge$ by induction on the complexity of $\varphi$.

$(t=\tau)^\wedge\leftrightharpoons t=\tau$.
\medskip

$\left(t\in_k x_i^{(k)}\right)^\wedge\leftrightharpoons t\in x_{(k,i)}$.
\medskip

$\bot^\wedge\leftrightharpoons\bot.$
\medskip

$(\psi\diamond \chi)^\wedge\leftrightharpoons \psi^\wedge\diamond \chi^\wedge.$
\medskip

$(Qn\psi)^\wedge\leftrightharpoons Qn\psi^\wedge.$
\medskip

$\left(Q x_i^{(k)}\psi\right)^\wedge\leftrightharpoons Qx_{(k,i)}\psi^\wedge.$
\medskip

Thus, $\varphi^\wedge$ is obtained from $\varphi$ by removing all sorts and renaming all set variables. Clearly, each axiom of $Ar+(AC!)$ can be obtained from a corresponding axiom of $SA$ in the same way. Therefore:
\[SA\vdash\varphi\quad\Rightarrow \quad Ar+(AC!)\vdash\varphi^\wedge\quad\Rightarrow\quad Ar+(AC!)\vdash\overline{\overline{\varphi}}^\wedge.\]

2. It is sufficient to show that $(AC!)$ is derived in $Ar+(\Delta_1^1-C)$. 

Consider a formula $\varphi$ that has no quantifiers over set variables and does not contain $y$. Denote:
\[\psi(n,x)\leftrightharpoons \exists k,m[n=(k,m)\wedge\varphi(k,x)\wedge m\in x],\]
\[\chi(n,x)\leftrightharpoons \exists k,m [n=(k,m)\wedge(\varphi(k,x)\supset m\in x)]
.\]

Assume the premise in $(AC!)$, that is 
\begin{equation}
\forall k\exists!x\varphi(k,x).
\label{eq:SA1}
\end{equation}

Then for any $n$:
\begin{equation}
\forall v\chi(n,v)\equiv\exists u\psi(n,u).
\label{eq:SA2}
\end{equation}

By axiom $(\Delta_1^1-C)$ there exists $y$ such that 
\begin{equation}
\forall n[n\in y\equiv\exists u\psi(n,u)].
\label{eq:SA3}
\end{equation}

It remains to prove: $\forall k\exists x[\varphi(k,x)\wedge\forall m(m\in x\equiv(k,m)\in y)].$ 

Consider an arbitrary $k$. By (\ref{eq:SA1}) there exists $x$ such that 
\begin{equation}
\varphi(k,x).
\label{eq:SA4}
\end{equation}

If $m\in x$, then for $n=(k,m)$ we have $\psi(n,x)$ and $n\in y$ by (\ref{eq:SA3}).

If $(k,m)\in y$, then for $n=(k,m)$ we have $\exists u\psi(n,u)$ by (\ref{eq:SA3}) and $\forall v\chi(n,v)$ by (\ref{eq:SA2}). So $\chi(n,x)$. By (\ref{eq:SA4}) $\varphi(k,x)$ and
$m\in x$ by the definition of $\chi$.
\end{proof}

\begin{corollary}
Theory $SA$ is interpretable in $Ar+(\Delta^1_1-C)$.
\end{corollary}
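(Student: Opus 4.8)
The plan is to obtain the corollary by composing the two parts of Theorem \ref{theorem:SA_int}, using transitivity of interpretability. Part 1 already supplies an explicit interpretation $\varphi\mapsto\varphi^\wedge$ of $SA$ in $Ar+(AC!)$, so the only thing left to observe is that an interpretation into a theory automatically yields an interpretation into any stronger theory formulated in the same language.

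First I would note that $Ar+(AC!)$ and $Ar+(\Delta^1_1-C)$ are formulated in the identical language, namely the language of $Ar$, since both $(AC!)$ and $(\Delta^1_1-C)$ are formulas of that language. By part 2 of the theorem, $Ar+(AC!)$ is a sub-theory of $Ar+(\Delta^1_1-C)$, which means that $Ar+(AC!)\vdash\chi$ implies $Ar+(\Delta^1_1-C)\vdash\chi$ for every sentence $\chi$ in this common language. In other words, the identity map interprets $Ar+(AC!)$ in $Ar+(\Delta^1_1-C)$.

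Then I would take the very same translation $\varphi\mapsto\varphi^\wedge$ used in the proof of part 1 and check that it still serves as an interpretation of $SA$ in $Ar+(\Delta^1_1-C)$. Concretely, suppose $SA\vdash\varphi$. By part 1 we have $Ar+(AC!)\vdash\overline{\overline{\varphi}}^\wedge$, and since $Ar+(AC!)$ is a sub-theory of $Ar+(\Delta^1_1-C)$, it follows that $Ar+(\Delta^1_1-C)\vdash\overline{\overline{\varphi}}^\wedge$. This is precisely the statement that $\varphi\mapsto\varphi^\wedge$ interprets $SA$ in $Ar+(\Delta^1_1-C)$, which is the assertion of the corollary.

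I do not expect any substantive obstacle here: the argument is a routine application of transitivity of interpretability. The only point that genuinely needs verification is that the two extensions of $Ar$ share a common language, so that the phrase \emph{sub-theory} in part 2 delivers the trivial identity interpretation of $Ar+(AC!)$ in $Ar+(\Delta^1_1-C)$ required for the composition; this is immediate because $(AC!)$ and $(\Delta^1_1-C)$ are both sentences of the language of $Ar$.
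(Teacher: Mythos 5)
Your argument is correct and is exactly the route the paper intends: the corollary is stated without proof as the immediate composition of Theorem~\ref{theorem:SA_int}.1 (the interpretation $\varphi\mapsto\varphi^\wedge$ of $SA$ in $Ar+(AC!)$) with Theorem~\ref{theorem:SA_int}.2 (the sub-theory inclusion into $Ar+(\Delta^1_1-C)$). Your additional check that the two extensions share the language of $Ar$ is a sensible, if routine, verification.
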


\section{Theory of arithmetical truth $PATr$} 
\label{sec:main}

This theory was introduced in \cite{kach14}. Theory $PATr$ is based on the axiomatic theory $PA$ for the first-order arithmetic. The language of $PATr$ is obtained from the language of $PA$ by adding predicate symbols $Tr_k (m,l), 
k=1,2,\ldots.$

For any $s\geqslant 1$, the language $PATr_s$ is obtained from the language of $PA$ by adding predicate symbols $Tr_k (m,l), 1\leqslant k\leqslant s.$ The language $PATr_0$ is just the language of $PA$.

Let us fix G\"{o}del numbering of expressions of the language $PATr$. It will be clear from context whether we use G\"{o}del numbering for expressions of $PATr$ or $BT$.
Next we introduce some arithmetical formulas.

$Form(k,m)\leftrightharpoons$ ``$m$ is the G\"{o}del number of a formula of $PATr_k$".

$Subform(m,r)\leftrightharpoons$ ``$r$ is the G\"{o}del number of a subformula of the formula with G\"{o}del number $m$".

$Param(m,i)\leftrightharpoons$ ``$n_i$ is a parameter of the expression of $PATr$ with G\"{o}del number $m$".

The following formula means that a sequence $l$ is an evaluation of all parameters of the expression with G\"{o}del number $m$:

$Ev(m,l)\leftrightharpoons (\forall i\leqslant m)[Param(m,i)\supset lh(l)\geqslant i].$ 

Clearly, the last four formulas define primitive recursive relations. 

We denote $eval$ and $subst$ the primitive recursive functions such that:

$eval(m,l)$ equals the value of term $t_m$ under evaluation $l$;

$subst(l,i,n)$ equals the evaluation $l$, in which the $i$-th  element is substituted by $n$.

Axiomatic theory $PATr$ has classical predicate logic with equality and the following non-logical axioms.

1. Peano axioms (the same as in $SA$).
\smallskip

2. Induction axiom.
$\quad \varphi(0)\wedge\forall n[\varphi(n)\supset\varphi(n+1)]\supset\forall n\varphi(n),$ where $\varphi$ is any formula of $PATr$.
\smallskip

3. Axioms for truth predicates (for any $k\geqslant 1$).

$\text{(Tr1)  }Tr_k(m,l)\supset Form(\overline{k-1},m)\wedge Ev(m,l);$

$\text{(Tr2)  }Ev(m,l)\wedge ``\varphi_m\textit{ is }t_i=t_j"
\supset [Tr_k(m,l)\equiv (eval(i,l)=eval(j,l))];$
\begin{multline*}
\text{(Tr3)  }
Ev(m,l)\wedge ``\varphi_m\textit{ is }Tr_k(t_i,t_j)"
\\
\supset [Tr_{k+1}(m,l)\equiv Tr_k(eval(i,l),eval(j,l))];
\end{multline*}

$\text{(Tr4)  }\neg Tr_k(\llcorner\bot\lrcorner,l)$;
\begin{multline*}
\text{(Tr5)  } Ev(m,l)\wedge  ``\varphi_m\textit{ is }\varphi_i\diamond\varphi_j"
\supset [Tr_k(m,l)\equiv (Tr_k(i,l)\diamond Tr_k(j,l))];
\end{multline*}
\begin{multline*}
\text{(Tr6)  } Ev(m,l)\wedge ``\varphi_m\textit{ is }Q n_i\varphi_j"
\supset [Tr_k(m,l)\equiv Q n Tr_k(j,subst(l,i,n))].
\end{multline*}

The axioms (Tr1)-(Tr6) describe $Tr_k$ as the truth predicate for formulas of $PATr_{k-1}$; that is, $Tr_k(m,l)$ means that the formula $\varphi_m$ is true under evaluation $l$.

This completes the definition of the theory $PATr$. Denote $PATr_s$ the fragment of $PATr$ in the language $PATr_s$. Clearly, $PATr_0$ is just the first-order arithmetic $PA$.

\section{Mutual interpretability of theories $BT$, $PATr$ and $SA$}
\label{section:interpretability}

\subsection{Interpretation of $BT$ in $PATr$}
In \cite{kach14} we constructed an interpretation $\varphi\rightarrow\varphi^\triangle$ and proved the following theorem and corollary.

\begin{theorem}
For $s\geqslant 0:$
if $BT_s\vdash \varphi$, then $PATr_s\vdash\overline{\overline{\varphi}}^\triangle.$
\label{theorem:BT-PATr}
\end{theorem}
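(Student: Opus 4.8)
The plan is to argue by induction on the length of a derivation of $\varphi$ in $BT_s$, verifying that the $\triangle$-translation of each axiom of $BT_s$ is provable in $PATr_s$ and that each rule of inference is preserved. Recall the intended shape of the interpretation from \cite{kach14}: a numerical variable goes to a numerical variable; an operation variable of type $0$ goes to a numerical variable ranging over indices of partial recursive functions, with $Ap(f,x,y)$ rendered by Kleene's $T$-predicate; and a variable of type $j$ $(1\leqslant j\leqslant s)$ goes to a numerical variable ranging over G\"{o}del numbers of formulas of $PATr_{j-1}$, the membership $X^{j-1}\in_{j-1}Y^{j}$ being rendered by $Tr_{j}$ applied to the code of $Y^{j}$ and an evaluation coding $X^{j-1}$. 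Since $(\cdot)^{\triangle}$ commutes with $\bot$, with the connectives, and---up to the domain conditions built into the coding---with the quantifiers, the whole verification reduces to the non-logical axioms and the induction rule.

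First I would dispose of the logical and arithmetical parts. Because $BT_s$ uses intuitionistic predicate logic while $PATr_s$ is classical, and $(\cdot)^{\triangle}$ is compositional on the logical operators, every instance of a logical rule of $BT_s$ becomes a classically valid inference on the translated formulas; the intuitionistic--classical gap is harmless since classical logic contains intuitionistic logic. The equality axioms translate to trivial facts about the codings. The combinatorial axioms translate to statements asserting that the fixed recursive codings of $\underline{k},\underline{s},\underline{p},\underline{p}_1,\underline{p}_2,\underline{d}$ satisfy the combinator equations and that $Ap$ is single-valued; as $BT_0$ already contains $HA$ and all these operations are primitive recursive, the translations are theorems of $PA\subseteq PATr_s$. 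The arithmetical induction rule of $BT_s$ is matched by the full induction schema of $PATr_s$ applied to $\varphi^{\triangle}$.

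The real work is the comprehension axiom
\[
\exists U^{k+1}\bigl[\,\underline{c}_n(\widetilde{X})\simeq U\wedge\forall Z^{k}(Z\in_{k}U\equiv\varphi)\,\bigr],\qquad k+1\leqslant s,
\]
with $\varphi$ a $(k+1)$-elementary formula. Under the translation the set-witness $U^{k+1}$ must be produced as an explicit numerical term, namely a G\"{o}del number $m$ of a formula of $PATr_{k}$, and one must verify the translated biconditional $Tr_{k+1}(m,l)\equiv\varphi^{\triangle}$, where $l$ codes $Z$. The idea is that $m$ is not a fixed code but is built by a primitive recursive operation on the codes of the parameters $\widetilde{X}$, following the construction of $\varphi$: the propositional connectives and the quantifiers of $\varphi$ are mirrored by the corresponding effective operations on formula codes, and one then proves $Tr_{k+1}(m,l)\equiv\varphi^{\triangle}$ by an inner induction on $\varphi$, peeling off at each step one of the Tarski axioms \textup{(Tr2)--(Tr6)} to match the outermost form of $\varphi^{\triangle}$.

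The step I expect to be the main obstacle is the level bookkeeping inside this inner induction: one must maintain the invariant that the code $m$ produced at every stage is a code of a genuine $PATr_{k}$ formula, so that $Tr_{k+1}$ is entitled to speak about it through \textup{(Tr1)}. This is exactly where the hypothesis that $\varphi$ is $(k+1)$-elementary is indispensable. The highest membership $\in_{k}$ occurring in $\varphi$ cannot be represented by inserting $Tr_{k+1}$ into the formula coded by $m$, since that would push $m$ out of level $k$; instead, when such an atom carries a parameter $W^{k+1}$ of code $w$, the construction routes it directly through $w$, whereas every lower membership $\in_{j}$ $(j<k)$ is represented by a legitimate $Tr_{j+1}$-atom inside the $PATr_{k}$ formula. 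The ban on quantifiers over type $k+1$ and on $=_{0,k+1}$ is what keeps all remaining subformulas within level $k$ and makes this routing coherent. Once the invariant is secured the biconditional follows axiom-by-axiom, and taking the universal closure at the end delivers $PATr_{s}\vdash\overline{\overline{\varphi}}^{\triangle}$.
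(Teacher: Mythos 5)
You should be aware that the paper does not actually prove Theorem \ref{theorem:BT-PATr}: it is imported verbatim from \cite{kach14} (``In \cite{kach14} we constructed an interpretation $\varphi\rightarrow\varphi^\triangle$ and proved the following theorem''), so there is no in-paper argument to measure your proposal against. Judged on its own merits, your outline is the expected one and is consistent with the reverse direction the paper does spell out in Section 5.2, where $PATr$ is interpreted in $SA$ by building truth \emph{sets} level by level; your interpretation runs the same correspondence the other way, sending a type-$j$ set variable to a code of a $PATr_{j-1}$ formula and rendering $\in_{j-1}$ by $Tr_j$. Your identification of the comprehension axiom as the only hard case, and of the level bookkeeping (splicing the parameter's own formula in for the top-level atoms $W^k\in_k V^{k+1}$, while lower memberships become honest $Tr_{j+1}$-atoms of the $PATr_k$ language) as the point where $(k+1)$-elementarity is used, is exactly right.

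Two places where your sketch is thinner than a complete proof would need to be. First, the treatment of operations: you posit indices of partial recursive functions with $Ap$ as Kleene application, but axioms 3.10), 3.13)--14) and the equality axioms force every set (i.e.\ every formula code) to have an operation-level representative and force $\underline{p}$, $\underline{c}_n$ to act on such representatives, so the coding must uniformly tag numbers, operation indices and formula codes of every type $\leqslant s$; this is doable but is a real piece of the construction, not a routine remark. Second, closing the Tarski biconditional $Tr_{k+1}(m,l)\equiv\varphi^{\triangle}$ after you splice the parameter's formula $\varphi_v$ into the witness code $m$ requires a formalized substitution lemma, $Tr_{k+1}(\llcorner\psi[n_i/t]\lrcorner,l)\equiv Tr_{k+1}(\llcorner\psi\lrcorner,subst(l,i,eval(\llcorner t\lrcorner,l)))$, proved inside $PATr_{k+1}$ by induction on the \emph{code} of $\psi$ (since $\psi$ is the formula coded by the variable $v$, not a fixed formula); this uses the full induction schema on formulas containing $Tr_{k+1}$ and should be stated explicitly. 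Neither point is a wrong turn, but both need to be filled in before the outer induction on the derivation goes through.
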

\begin{corollary}
If $BT\vdash \varphi$, then $PATr\vdash\overline{\overline{\varphi}}^\triangle.$
\label{coro:BT-PATr}
\end{corollary}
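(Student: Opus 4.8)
The plan is to deduce the Corollary from Theorem \ref{theorem:BT-PATr} by a finiteness argument on the types occurring in a derivation. Suppose $BT\vdash\varphi$. A derivation of $\varphi$ in $BT$ is, by definition, a finite sequence of formulas, and each formula carries only finitely many type superscripts; hence there is some $s\geqslant 0$ such that every formula in the derivation, including $\varphi$ itself, mentions only types $\leqslant s$.

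The key step is to verify that this derivation is already a derivation in the fragment $BT_s$. For this one checks that every axiom instance used belongs to $BT_s$: the logical, equality, combinatorial and induction axioms instantiated at types $\leqslant s$ are precisely the corresponding $BT_s$ axioms, while each comprehension instance $\exists U^{k+1}[\underline{c}_n(\widetilde{X})\simeq U\wedge\forall Z^k(Z\in_k U\equiv\theta)]$ occurring in the derivation has $k+1\leqslant s$, so its defining formula $\theta$ is $(k+1)$-elementary with all types bounded by $s$ and the instance is a comprehension axiom of $BT_s$. Hence $BT_s\vdash\varphi$.

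Theorem \ref{theorem:BT-PATr} then yields $PATr_s\vdash\overline{\overline{\varphi}}^\triangle$. Finally, since the language $PATr_s$ is a sublanguage of $PATr$ and each non-logical axiom of $PATr_s$ is among the axioms of $PATr$, the theory $PATr_s$ is a subtheory of $PATr$, so $PATr\vdash\overline{\overline{\varphi}}^\triangle$, as required.

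I expect the only delicate point---rather than a genuine obstacle---to be the middle step: confirming that no axiom schema of $BT$ implicitly refers to a type exceeding $s$ once it is instantiated at types $\leqslant s$. The comprehension schema, whose constant $\underline{c}_n$ is indexed by the G\"{o}del number of its $(k+1)$-elementary defining formula, is the one instance worth inspecting, but since that formula already has all its types bounded by $k+1\leqslant s$, the instance lives entirely inside $BT_s$ and the reduction goes through.
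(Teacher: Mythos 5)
Your proof is correct and follows the intended route: the paper derives the corollary from Theorem \ref{theorem:BT-PATr} by exactly this finiteness-of-derivations argument (any $BT$-proof uses only finitely many types, hence lies in some fragment $BT_s$), the same device the paper uses explicitly in the proof of the disjunction property. Your extra care about the comprehension constants $\underline{c}_n$ is a reasonable sanity check but raises no real issue, since an instance of the comprehension schema for type $k+1>s$ would itself mention a variable of type $k+1$ and so could not occur in a derivation confined to types $\leqslant s$.
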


\subsection{Interpretation of $PATr$ in $SA$}

For $k\geqslant 1$ denote $\widetilde{x}$ the list of variables $x_1^{(2)},\ldots,x_{k-1}^{(k)}$; when $k=1$, this list is empty. We define the following three formulas in $SA$.
\begin{multline*}
A_k(r,l,\widetilde{x},y^{(k)})
\leftrightharpoons \exists i,j\left\lbrace \left[``\varphi_r\textit{ is }t_i=t_j"\wedge eval(i,l)=eval(j,l)
\right] 
\right.
\\
\left.
\vee \bigvee_{q=1}^{k-1}\left[ ``\varphi_r\textit{ is }Tr_q(t_i,t_j)"\wedge (eval(i,l),(eval(i,l),eval(j,l)))\in x_q\right] 
\right.
\\
\left.
\vee \left[``\varphi_r\textit{ is }\varphi_i\diamond\varphi_j"\wedge ((i,l)\in y\diamond (j,l)\in y)\right]
\right.
\\
\left.
\vee \left[ ``\varphi_r\textit{ is }Q n_i\varphi_j"\wedge Q n \left[ (j,subst(l,i,n))\in y\right]\right]
\right\rbrace.
\end{multline*}
\begin{multline*}
FTrset_k(m,\widetilde{x},y^{(k)})\leftrightharpoons Form(\overline{k-1},m)\wedge \forall p\left\lbrace p\in y
\right.
\\
\left.
\equiv\exists r,l\left[p=(r,l)\wedge Subform(m,r)\wedge Ev(r,l)\wedge A_k(r,l,\widetilde{x},y) \right]\right\rbrace.
\end{multline*}

The last formula means that set $y$ contains G\"{o}del numbers of all true evaluated subformulas of formula $\varphi_m$ of $PATr_{k-1}$ given that $x_1,\ldots,x_{k-1}$ are corresponding truth sets for formulas of $PATr_0,\ldots,PATr_{k-2}$, respectively.
\begin{multline*}
Trset_k(\widetilde{x},z^{(k+1)})\leftrightharpoons
(\forall p\in z)\exists m,q[p=(m,q)
\wedge Form(\overline{k-1},m)]
\\
\wedge\forall m\left\lbrace Form(\overline{k-1},m)
\supset \exists y^{(k)}\left[FTrset_k(m,\widetilde{x},y)\wedge\forall n(n\in y\equiv(m,n)\in z)\right]\right\rbrace.
\end{multline*}

The last formula means that set $z$ contains G\"{o}del numbers of all true evaluated formulas of $PATr_{k-1}$ assuming that $x_1,\ldots,x_{k-1}$ are corresponding truth sets for formulas of $PATr_0,\ldots,PATr_{k-2}$, respectively.

By the comprehension axiom and the definition of set equality we have: 
\[SA\vdash\exists!z^{(k)}\forall n(n\in z\equiv\varphi(n)).\]

So for a $k$-simple formula $\varphi$ we can introduce in $SA$ a functional symbol $\{n\mid \varphi(n)\}$ of sort $k$.

We can introduce the following notations:
\[[x]_m=\{n\in x\mid\exists r,l[n=(r,l)\wedge SubForm(m,r)]\};\]
\[\emptyset^{(k)}=\{n\mid \neg(n=n)\}.\]

\begin{lemma}
The following formulas are derived in $SA_s$.
\medskip

$1. \; Form(\overline{k-1},m)
\supset\exists! y^{(k)} 
FTrset_k(m,\widetilde{x},y),\textit{ where }1\leqslant k\leqslant s.$
\begin{multline*}
2. \; Form(\overline{k-1},m)\wedge  SubForm(m,r)\wedge FTrset_k(m,\widetilde{x},y)
\\
\supset FTrset_k(r,\widetilde{x},[y]_r),\textit{ where }1\leqslant k\leqslant s.
\end{multline*} 

$3. \;\exists! z^{(k+1)} 
Trset_k(\widetilde{x},z),\textit{ where }1\leqslant k< s.$
\label{lemma:PATr-SA-1}
\end{lemma}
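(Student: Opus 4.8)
The plan is to establish the three statements in the order (2), then (1), then (3), because the existence half of (1) will draw on both the coherence statement (2) and on the uniqueness half of (1). The structural fact underlying everything is an arithmetical (hence $PA$-provable) property of the coding: the set argument $y$ occurs in $A_k(r,l,\widetilde{x},y)$ only through membership tests on pairs whose first coordinate is an \emph{immediate} subformula of $\varphi_r$, and immediate subformulas have strictly smaller complexity than $\varphi_r$. Thus the defining clause of $FTrset_k$ is a well-founded recursion on formula complexity, and every induction below is an induction on complexity, which is available since complexity is primitive recursive in the G\"odel number and $SA_s$ has the full induction axiom.

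For part (2) I would argue directly from the definitions. Assuming $Form(\overline{k-1},m)$, $SubForm(m,r)$ and $FTrset_k(m,\widetilde{x},y)$, transitivity of $SubForm$ turns the defining equivalence for $y$ into $(r',l)\in[y]_r \equiv SubForm(r,r')\wedge Ev(r',l)\wedge A_k(r',l,\widetilde{x},y)$. It then suffices to check that $A_k(r',l,\widetilde{x},y)\equiv A_k(r',l,\widetilde{x},[y]_r)$ whenever $SubForm(r,r')$; but the only pairs on which $A_k(r',l,\cdot)$ consults its set argument have first coordinates that are immediate subformulas of $\varphi_{r'}$, hence subformulas of $\varphi_r$, and on exactly those pairs $y$ and $[y]_r$ coincide by the definition of $[\cdot]_r$. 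That $Form(\overline{k-1},r)$ holds is the arithmetical fact that a subformula of a $PATr_{k-1}$-formula is again one.

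For part (1) I would treat uniqueness and existence separately. Uniqueness: if $FTrset_k(m,\widetilde{x},y)$ and $FTrset_k(m,\widetilde{x},y')$, then for each pair $(r,l)$ with $SubForm(m,r)$ I prove $(r,l)\in y\equiv(r,l)\in y'$ by induction on the complexity of $\varphi_r$, reading off the relevant disjunct of $A_k$; the atomic disjuncts are $y$-free, and in the connective and quantifier disjuncts the induction hypothesis applies to the immediate subformulas whose codes appear in the membership tests. Pairs not of this form belong to neither set. Existence: by induction on the complexity of $\varphi_m$. When $\varphi_m$ is atomic, the set $\{(m,l)\mid Ev(m,l)\wedge A_k(m,l,\widetilde{x},\emptyset^{(k)})\}$ witnesses $FTrset_k$, and it exists by the comprehension axiom because this instance of $A_k$ is a $k$-simple formula in the parameters $\widetilde{x}$. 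When $\varphi_m$ is $\varphi_i\diamond\varphi_j$ or $Qn_i\varphi_j$, I obtain truth sets $y_i,y_j$ (resp.\ $y_j$) from the induction hypothesis and glue them by a $k$-simple comprehension that retains the subformula pairs contributed by $y_i$ and $y_j$ and adjoins the top pairs $(m,l)$ governed by the matching clause of $A_k$; verifying the fixed-point equation for the glued set uses part (2) together with uniqueness, so that $y_i$ and $y_j$ provably agree on every common subformula of $\varphi_i$ and $\varphi_j$.

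For part (3) I would invoke the choice axiom of $SA$. Since $FTrset_k(m,\widetilde{x},y)$ is $k$-simple, part (1) gives $\forall m\,\exists!\,y^{(k)}\,\psi(m,y)$, where $\psi(m,y)$ asserts that $y$ is the truth set if $Form(\overline{k-1},m)$ and $y=_k\emptyset^{(k)}$ otherwise; the choice axiom then produces a sort-$(k+1)$ set whose $m$-th slice is precisely that $y$. Deleting every element that is not a pair with a $Form(\overline{k-1},\cdot)$ first coordinate, by a $(k+1)$-simple comprehension, yields $z$ with $Trset_k(\widetilde{x},z)$, and uniqueness of $z$ follows from the uniqueness in part (1), since two such sets have equal $m$-slices for every formula $m$ and contain nothing else. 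I expect the existence half of part (1) to be the main obstacle: the inductive gluing must respect the self-referential fixed-point equation at shared subformulas, which is exactly where part (2) and uniqueness are indispensable. A secondary point needing care is that the truth set cannot be produced by a single comprehension, because the natural definition quantifies over the sort-$k$ witness and so is not $k$-simple; this is why the stratified induction on complexity, and the appeal to the choice axiom in part (3), are both required.
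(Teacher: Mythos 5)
Your proposal is correct and rests on the same machinery as the paper's (very terse) proof: a structural induction on the formula coded by $m$ for part (1), with the witness built up by $k$-simple comprehension instances, and an application of the choice axiom plus a final comprehension for part (3) via exactly the auxiliary formula $\psi(m,y)$ that the paper uses. The one genuine difference is the dependency structure between parts (1) and (2). The paper proves (1) first by induction on $m$ and then obtains (2) as a consequence of (1) and the definition of $FTrset_k$; you prove (2) first, directly from the locality of $A_k$ (it consults its set argument only at pairs whose first coordinates code immediate subformulas, where $y$ and $[y]_r$ agree), and then feed (2) and the uniqueness half of (1) into the existence induction to guarantee that the truth sets $y_i,y_j$ of the immediate subformulas cohere on shared subformulas before gluing. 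This coherence step is real and is left implicit in the paper's one-line ``induction on $m$''; your reordering makes it explicit and shows that no circularity arises, since your proof of (2) does not presuppose (1). The trade-off is purely expository: the paper's order is shorter, yours is more self-verifying. Both routes are valid, and your observations that $FTrset_k$ and the gluing comprehensions are $k$-simple (so that the comprehension and choice axioms of $SA_s$ apply) are exactly the points that need checking.
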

\begin{proof}
1. Proof is by induction on $m$ using the definition of $FTrset_k$.

2. This follows from part 1 and the definition of $FTrset_k$.

3. Fix $\widetilde{x}$. Denote $\psi(m,y^{(k)})\leftrightharpoons
\medskip
\\ \left[Form(\overline{k-1},m)\supset  FTrset_k(m,\widetilde{x},y)\right]\wedge \left[\neg Form(\overline{k-1},m)\supset y=\emptyset^{(k)}\right].$
\medskip

By part 1, $\forall m\exists!y^{(k)}\psi(m,y)$. 
By the choice axiom there exists $v^{(k+1)}$ such that:
\[\forall m\exists y^{(k)}[\psi(m,y)\wedge\forall n(n\in y\equiv(m,n)\in v)].\]

Then for $z^{(k+1)}=\{q\in v\mid\exists m,n(q=(m,n))\}$ we have $Trset_k(\widetilde{x},z)$. 

The uniqueness follows from part 1 and the definition of $Trset_k$. 
\end{proof}

By Lemma \ref{lemma:PATr-SA-1}.3 for $k=1$, $SA_2\vdash\exists!z^{(2)}Trset_1(z)$, so we can introduce in $SA_2$ a constant $a_1^{(2)}$ such that $SA_2\vdash Trset_1(a_1)$. 

By Lemma \ref{lemma:PATr-SA-1}.3 for $k=2$, $SA_3\vdash\exists!z^{(3)}Trset_2(a_1,z)$, so we can introduce in $SA_3$ a constant $a_2^{(3)}$ such that $SA_3\vdash Trset_2(a_1, a_2)$. 

Continuing by induction, we can introduce in $SA_{k+1}$ a constant $a_k^{(k+1)}$ such that 
\begin{equation}
SA_{k+1}\vdash Trset_k(a_1,\ldots, a_{k-1}, a_k).
\label{eq:PATr-SA-1}
\end{equation}

By Lemma \ref{lemma:PATr-SA-1}.1,
\begin{multline*}
SA_{k}\vdash \forall m\exists! y^{(k)} \left\lbrace 
\left[Form(\overline{k-1},m)\supset FTrset_k(m,a_1,\ldots, a_{k-1},y)\right]
\right.
\\
\left.
\wedge
\left[\neg Form(\overline{k-1},m)\supset y=\emptyset^{(k)}\right]
\right\rbrace .
\end{multline*} 

So we can introduce in $SA_{k}$ a functional symbol $g_k(m)^{(k)}$ such that 
\begin{equation}
SA_{k}\vdash Form(\overline{k-1},m)\supset FTrset_k(m,a_1,\ldots, a_{k-1},g_k(m)).
\label{eq:PATr-SA-2}
\end{equation}

\begin{lemma}
For $1\leqslant k< s:$
\[SA_s\vdash n\in_k g_k(m)\equiv (m,n)\in_{k+1} a_k.\]
\label{lemma:PATr-SA-2}
\end{lemma}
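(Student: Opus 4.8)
The plan is to work in $SA_s$ (where $s\geqslant k+1$, so that the sort-$(k+1)$ constant $a_k$ is available) and to argue by cases on whether $Form(\overline{k-1},m)$ holds, reducing the claimed equivalence to the uniqueness clause of Lemma \ref{lemma:PATr-SA-1}.1.

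First I would treat the case $Form(\overline{k-1},m)$. Here (\ref{eq:PATr-SA-2}) immediately gives $FTrset_k(m,a_1,\ldots,a_{k-1},g_k(m))$. On the other hand, by (\ref{eq:PATr-SA-1}) we have $Trset_k(a_1,\ldots,a_{k-1},a_k)$; unfolding the second conjunct of $Trset_k$ at this particular $m$ yields some $y^{(k)}$ with $FTrset_k(m,a_1,\ldots,a_{k-1},y)$ together with $\forall n(n\in y\equiv(m,n)\in a_k)$. Now Lemma \ref{lemma:PATr-SA-1}.1 guarantees a \emph{unique} witness of $FTrset_k(m,a_1,\ldots,a_{k-1},\cdot)$, so $y=_k g_k(m)$, and therefore $\forall n(n\in g_k(m)\equiv(m,n)\in a_k)$, which is exactly the desired equivalence on this branch.

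Next I would treat the case $\neg Form(\overline{k-1},m)$. By the defining clause $\neg Form(\overline{k-1},m)\supset y=\emptyset^{(k)}$ used when the symbol $g_k$ was introduced, we get $g_k(m)=_k\emptyset^{(k)}$, so $n\in_k g_k(m)$ is refutable for every $n$. For the right-hand side, the first conjunct of $Trset_k$ asserts $(\forall p\in a_k)\exists m',q[p=(m',q)\wedge Form(\overline{k-1},m')]$; applying this to $p=(m,n)$ and using injectivity of the pairing would force $Form(\overline{k-1},m)$, contradicting the case hypothesis, so $(m,n)\notin_{k+1} a_k$. Both sides being false, the equivalence holds, and combining the two cases yields the lemma.

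The only delicate point is the uniqueness step: one must check that $g_k(m)$ and the witness $y$ extracted from $Trset_k$ are witnesses of $FTrset_k$ relative to the \emph{same} list $a_1,\ldots,a_{k-1}$ of lower truth-set constants, so that Lemma \ref{lemma:PATr-SA-1}.1 genuinely identifies them. This is immediate from the way $g_k$ and $a_k$ were introduced in (\ref{eq:PATr-SA-2}) and (\ref{eq:PATr-SA-1}), both being defined relative to the fixed constants $a_1,\ldots,a_{k-1}$; everything else is routine case analysis.
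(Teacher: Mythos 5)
Your proposal is correct and follows essentially the same route as the paper, whose proof is simply the remark that the lemma ``follows from the definitions and formulas (\ref{eq:PATr-SA-1}), (\ref{eq:PATr-SA-2})''; you have merely spelled out the case split on $Form(\overline{k-1},m)$ and the appeal to the uniqueness clause of Lemma \ref{lemma:PATr-SA-1}.1 that the paper leaves implicit. Your attention to the fact that both witnesses of $FTrset_k$ are taken relative to the same constants $a_1,\ldots,a_{k-1}$ is exactly the point that makes the identification $y=_k g_k(m)$ legitimate.
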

\begin{proof}
Proof follows from the definitions and formulas (\ref{eq:PATr-SA-1}), (\ref{eq:PATr-SA-2}).
\end{proof}

Next for each formula $\varphi$ of $PATr$ we define its interpretation $\varphi^\sim$ in $SA$ by induction on the complexity of $\varphi$.

$(t=\tau)^\sim\leftrightharpoons t=\tau$.
\medskip

$Tr_k(t,\tau)^\sim\leftrightharpoons (t,\tau)\in_k g_k(t)$, $k\geqslant 1$.
\medskip

$\bot^\sim\leftrightharpoons\bot.$
\medskip

$(\psi\diamond \chi)^\sim\leftrightharpoons \psi^\sim\diamond \chi^\sim.$
\medskip

$(Qn\psi)^\sim\leftrightharpoons Qn\psi^\sim.$
\medskip

Clearly, if $\varphi$ is a formula of $PATr_s$, then $\varphi^\sim$ is a formula of $SA_s$ $(s\geqslant 0)$.

\begin{theorem}
\begin{enumerate}
\item For an arithmetical formula $\varphi$, $\varphi^\sim$ is the same as $\varphi$.
\item For $s\geqslant 0:$
if $PATr_s\vdash \varphi$, then $SA_s\vdash(\overline{\overline{\varphi}})^\sim.$
\end{enumerate}
\label{theorem:PATr-SA}
\end{theorem}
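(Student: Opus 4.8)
The plan is to treat $\cdot^\sim$ as a standard relative interpretation and argue by induction on derivations, reducing everything to checking the non-logical axioms. Part (1) is immediate: an arithmetical formula contains no predicate $Tr_k$, so every clause in the definition of $\cdot^\sim$ acts as the identity, and a routine induction on the complexity of $\varphi$ gives $\varphi^\sim=\varphi$. For part (2), I would first observe that $\cdot^\sim$ leaves numerical terms unchanged and commutes with $\bot$, with each connective $\diamond$, and with each numerical quantifier $Qn$; since $PATr$ and $SA$ share classical predicate logic with equality and the only quantifiers of $PATr$ are numerical, $\cdot^\sim$ sends a derivation in $PATr_s$ to a derivation in $SA_s$ provided the closure of each non-logical axiom of $PATr_s$ is interpreted into a theorem of $SA_s$. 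This is the only thing that needs verification.

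Next I would dispatch the easy axioms. The Peano axioms are arithmetical, so by part (1) their interpretations coincide with themselves and are axioms of $SA_s$. For the induction scheme applied to a formula $\varphi$ of $PATr_s$, its interpretation is literally the induction scheme of $SA_s$ applied to $\varphi^\sim$; since $\varphi^\sim$ is a formula of $SA_s$ and $SA_s$ has unrestricted induction, this interpretation is provable in $SA_s$.

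The core of the argument is the verification of the truth axioms (Tr1)--(Tr6) for each $k$ with $1\leqslant k\leqslant s$. Under $\cdot^\sim$ the atom $Tr_k(m,l)$ becomes $(m,l)\in_k g_k(m)$, and by (\ref{eq:PATr-SA-2}) together with the convention that $g_k(m)=\emptyset^{(k)}$ when $\neg Form(\overline{k-1},m)$, membership $(m,l)\in_k g_k(m)$ unfolds through the defining equivalence of $FTrset_k$ and the clause $A_k(m,l,a_1,\ldots,a_{k-1},g_k(m))$ into precisely the Tarskian conditions. Concretely: (Tr1) holds because a membership in $g_k(m)$ forces $Form(\overline{k-1},m)$ and, taking $r=m$ and using $Subform(m,m)$, also $Ev(m,l)$; and (Tr2), (Tr4), (Tr5), (Tr6) each follow by matching the corresponding disjunct of $A_k$ against the defining clause of $FTrset_k$, using that $Subform(m,\cdot)$ holds of the immediate subformulas so that the truth values of $\varphi_i,\varphi_j$ already reside in $g_k(m)$.

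The hard part will be (Tr3), the only axiom linking level $k+1$ back to level $k$, and this is exactly where Lemma \ref{lemma:PATr-SA-2} is needed. Here the relevant disjunct of $A_{k+1}$ (the term with $q=k$) is the membership statement $(eval(i,l),(eval(i,l),eval(j,l)))\in_{k+1}a_k$, which by Lemma \ref{lemma:PATr-SA-2} is equivalent to $(eval(i,l),eval(j,l))\in_k g_k(eval(i,l))$, i.e.\ to $Tr_k(eval(i,l),eval(j,l))^\sim$. Thus the cross-level clause matches the interpretation on the nose, and the side condition $1\leqslant k<s$ of the lemma coincides with the requirement $k+1\leqslant s$ for $Tr_{k+1}$ to occur in $PATr_s$. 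Having checked all the axioms, the induction on derivations goes through and yields $SA_s\vdash(\overline{\overline{\varphi}})^\sim$.
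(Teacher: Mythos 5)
Your proposal is correct and follows essentially the same route as the paper: part (1) by inspection of the clauses of $\cdot^\sim$, and part (2) by induction on derivations, with the only substantive work being the verification of (Tr1)--(Tr6) via the properties of $FTrset_k$, $a_k$ and $g_k$ (Lemmas \ref{lemma:PATr-SA-1} and \ref{lemma:PATr-SA-2}), including the correct observation that Lemma \ref{lemma:PATr-SA-2} is exactly what handles the cross-level axiom (Tr3). Your write-up is in fact more explicit than the paper's, which compresses the truth-axiom check into a citation of those two lemmas and the definitions of $a_k$ and $g_k$.
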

\begin{proof}
1. This follows immediately from the definition of $\varphi^\sim$.

2. Both $PATr_0$ and $SA_0$ are the same as the first-order arithmetic $PA$. 

For $s\geqslant 1$ proof is by induction on the length of derivation of $\varphi$. Since logical connectives and quantifiers are preserved in this interpretation, the statement is obvious for the induction axiom and the classical predicate logic. Peano axioms are the same in both theories.

For axioms $(Tr1)-(Tr6)$ the statement follows from Lemmas \ref{lemma:PATr-SA-1}, \ref{lemma:PATr-SA-2} and the definitions of $a_k$ and $g_k$.
\end{proof}

\begin{corollary}
If $PATr\vdash \varphi$, then $SA\vdash(\overline{\overline{\varphi}})^\sim.$
\label{coro:PATr-SA}
\end{corollary}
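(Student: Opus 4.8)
The plan is to reduce the statement about the full theory $PATr$ to the fragment version already established in Theorem~\ref{theorem:PATr-SA}.2, exploiting the finiteness of formal proofs. Suppose $PATr\vdash\varphi$. A formal proof is by definition a finite sequence of formulas, and each such formula mentions only finitely many of the predicate symbols $Tr_k$. Hence there is a bound $s\geqslant 0$ such that every formula occurring in the proof, and in particular $\varphi$ itself, is a formula of the language $PATr_s$, and the entire derivation is already a derivation in the fragment $PATr_s$. This gives $PATr_s\vdash\varphi$. This is exactly the same "a proof uses only finitely many sorts" observation that was invoked in the passage to $BT_{p-1}$ in the proof of the disjunction property (Theorem~\ref{theorem:disjunction}).

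Next I would note that, since $\varphi$ is a formula of $PATr_s$, its interpretation $\varphi^\sim$, and likewise the interpretation $(\overline{\overline{\varphi}})^\sim$ of its closure, are well-defined formulas of $SA_s$. Because the interpretation $\sim$ is defined uniformly by induction on complexity, the formula it produces does not depend on which fragment we regard $\varphi$ as belonging to; increasing $s$ leaves $(\overline{\overline{\varphi}})^\sim$ unchanged. Applying Theorem~\ref{theorem:PATr-SA}.2 to the fragment $PATr_s$ then yields $SA_s\vdash(\overline{\overline{\varphi}})^\sim$.

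Finally I would use that $SA_s$ is, by definition, the subtheory of $SA$ obtained by restricting to sorts $\leqslant s$, so every axiom of $SA_s$ is an axiom of $SA$ and every derivation in $SA_s$ is a derivation in $SA$. Therefore $SA\vdash(\overline{\overline{\varphi}})^\sim$, which is the required conclusion.

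There is essentially no technical obstacle in this argument, as it is a routine compactness-style lifting of a fragmentwise result. The only point that warrants a moment's care is verifying that the choice of the bound $s$ has no effect on the target formula $(\overline{\overline{\varphi}})^\sim$; this is immediate from the inductive, fragment-independent definition of the interpretation and from the fact that all variables of $PATr$ are numerical, so that forming the closure $\overline{\overline{\varphi}}$ keeps the formula within $PATr_s$.
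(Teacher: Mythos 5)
Your argument is correct and is essentially the one the paper intends: the corollary is treated as an immediate consequence of Theorem~\ref{theorem:PATr-SA}.2, since any derivation in $PATr$ is finite and hence lies in some fragment $PATr_s$, after which $SA_s\subseteq SA$ gives the conclusion. Your additional check that $(\overline{\overline{\varphi}})^\sim$ does not depend on the choice of $s$ is a reasonable point of care but does not change the route.
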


\subsection{Interpretation of $SA$ in $BT$ with classical logic}

We will use the following notations in $BT$.

$\{n\}^0=n$, $\{n\}^{k+1}=\{\{n\}^k\}.$
\medskip

For $k\geqslant 1:$ $M_k(X^k)\leftrightharpoons \left(\forall Z^{k-1}\in X\right)\exists n\left( \{n\}^{k-1}\simeq Z\right).$
$M_k(X^k)$ means that $X$ is an interpretation of a set of natural numbers of sort $k$.

 For every formula $\varphi$ of $SA$ we define its interpretation $\varphi^*$ by induction on the complexity of $\varphi$.

$(t=\tau)^*\leftrightharpoons t=\tau$ (we identify arithmetical terms in $SA$ with corresponding arithmetical terms in $BT$).
\medskip

$\left(t\in_k x_i^{(k)}\right)^*\leftrightharpoons \{t\}^{k-1}\in_{k-1}X^k_i$.
\medskip

$\bot^*\leftrightharpoons\bot.$
\medskip

$(\psi\diamond \chi)^*\leftrightharpoons \psi^*\diamond \chi^*.$
\medskip

$(Qn\psi)^*\leftrightharpoons Qn\psi^*.$
\medskip

$\left(\forall x_i^{(k)}\psi\right) ^*\leftrightharpoons \forall X_i^k\left[M_k(X_i)\supset\psi^*\right] .$
\medskip

$\left(\exists x_i^{(k)}\psi\right) ^*\leftrightharpoons \exists X_i^k\left[M_k(X_i)\wedge\psi^*\right] .$
\medskip

Clearly, if $\varphi$ is a formula of $SA_s$, then $\varphi^*$ is a formula of $BT_s$ $(s\geqslant 0)$.

The following is proven by induction on $k$:
\begin{equation}
\left(\{n\}^k\simeq x\right) \textit{ is a 0-elementary formula.}
\label{eq:SA_1}
\end{equation} 

This implies:
\begin{equation}
\left(t\in_k x_i^{(k)}\right)^* \textit{ is a }k\textit{-elementary formula;}
\label{eq:SA_2}
\end{equation} 
\begin{equation}
M_k(X^k) \textit{ is a }k\textit{-elementary formula.}
\label{eq:SA_3}
\end{equation} 

Using (\ref{eq:SA_1}) and (\ref{eq:SA_2}) the following is proven by induction on the complexity of $\varphi$:
\begin{equation}
\textit{If }\varphi\textit{ is a }k\textit{-simple formula, then }\varphi^*\textit{is a }k\textit{-elementary formula.}
\label{eq:SA_4}
\end{equation}

We denote $BT^{cl}$ the theory $BT$ with classical logic and $BT_s^{cl}$ the theory $BT_s$ with classical logic.

\begin{theorem}
\begin{enumerate}
\item If $\psi$ is an arithmetical formula (that is, a formula of $PA$), then $BT_0^{cl}\vdash\psi^*\equiv\psi.$
\item For $s\geqslant 0:$ if $SA_s\vdash \psi$, then $BT_s^{cl}\vdash(\overline{\overline{\psi}})^*$.
\end{enumerate}
\label{theorem:SA-BT}
\end{theorem}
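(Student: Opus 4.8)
The plan is to prove part 1 by induction on the complexity of $\psi$ and part 2 by induction on the length of the $SA_s$-derivation of $\psi$.

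Part 1 is essentially bookkeeping. On an arithmetical formula the map $*$ changes nothing beyond identifying the arithmetical terms of $SA$ with the corresponding terms of $BT$, since it fixes atomic equalities and $\bot$ and commutes with connectives and numerical quantifiers. Because $BT_0$ contains $HA$, in $BT_0^{cl}$ we have the Peano axioms together with the recursion equations defining $+$ and $\cdot$; an induction on the structure of $\psi$, whose only nontrivial point is the agreement of the two readings of each arithmetical term, then yields $BT_0^{cl}\vdash\psi^*\equiv\psi$.

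For part 2 I would first record the non-emptiness facts $BT_s^{cl}\vdash\exists X^k M_k(X^k)$ for $1\leqslant k\leqslant s$: taking $X^k$ to be the empty type-$k$ set supplied by the comprehension axiom of $BT$ (applied to a refutable $k$-elementary formula) makes $M_k(X^k)$ hold vacuously. Since $*$ relativizes every set quantifier $Qx^{(k)}$ to $M_k$ and is the identity on the remaining logical structure, these facts turn $*$ into a relativization interpretation, so all of classical predicate logic with equality is validated (using also the congruence axioms of $BT$). The Peano axioms are arithmetical and hence handled by part 1, and the interpretation of the induction axiom of $SA_s$ coincides with the instance of the induction schema of $BT$ for the $BT_s$-formula $\varphi^*$, so it is provable in $BT_s$ and a fortiori in $BT_s^{cl}$. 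The comprehension axiom of $SA_s$ is handled by that of $BT$: given a $k$-simple $\varphi(n)$ I set $\chi(W^{k-1})\leftrightharpoons\exists n(\{n\}^{k-1}\simeq W\wedge\varphi^*(n))$, check via (\ref{eq:SA_1}) and (\ref{eq:SA_4}) that $\chi$ is $k$-elementary, and apply the comprehension axiom of $BT$ (with the type shifted down by one) to obtain $U^k$ with $\forall W^{k-1}(W\in_{k-1}U\equiv\chi(W))$. Then $M_k(U)$ holds because every element of $U$ has the form $\{n\}^{k-1}$, and the required equivalence $\{n\}^{k-1}\in_{k-1}U\equiv\varphi^*(n)$ follows once one knows $BT\vdash\{n\}^{k-1}\downarrow$ and $BT\vdash\{n\}^{k-1}\simeq\{m\}^{k-1}\supset n=m$; I would establish totality and injectivity of the tower $\{\cdot\}^{k-1}$ by induction on $k$ from its definition.

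The choice axiom is the main obstacle. Under $*$ its premise asserts, for each $n$, the existence of a set $X^k$ with $M_k(X)\wedge\varphi^*(n,X)$ that is unique modulo the translation of $=_k$, i.e.\ extensionally unique, and I must build a single type-$(k+1)$ set coding the whole family. The candidate is the set $Y^{k+1}$ given by $BT$-comprehension for the formula
\[
\theta(Z^k)\leftrightharpoons\exists n,m\bigl[\{(n,m)\}^k\simeq Z\wedge\exists X^k(M_k(X)\wedge\varphi^*(n,X)\wedge\{m\}^{k-1}\in_{k-1}X)\bigr].
\]
The delicate points are: checking that $\theta$ is $(k+1)$-elementary — the bound quantifier $\exists X^k$ is over type $k$ and is therefore permitted, while $M_k$ and $\varphi^*$ are $k$-elementary by (\ref{eq:SA_3}) and (\ref{eq:SA_4}) — confirming $M_{k+1}(Y)$, and, for each $n$, identifying the section of $Y$ with the unique $X$ satisfying $\varphi^*(n,X)$. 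The last step is where the real work lies: it combines the injectivity of $\{\cdot\}^{k}$ and of the pairing function with the extensional uniqueness in the premise to derive $\forall m(\{m\}^{k-1}\in_{k-1}X\equiv\{(n,m)\}^k\in_k Y)$. Careful tracking of the elementarity levels and the use of extensional rather than intensional uniqueness are the parts I expect to demand the most care.
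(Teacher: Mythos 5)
Your proposal is correct and follows essentially the same route as the paper: part 1 by unwinding the definition of $*$, part 2 by induction on derivations with the set quantifiers relativized to $M_k$, the comprehension case handled by applying $BT$-comprehension to the $k$-elementary formula $\exists n(\{n\}^{k-1}\simeq Z^{k-1}\wedge\varphi(n)^*)$, and the choice case handled by $BT$-comprehension applied to a $(k+1)$-elementary formula that is literally the paper's $\xi(U^k)$. The extra points you flag (non-emptiness of $M_k$, totality and injectivity of the tower $\{\cdot\}^{k}$) are genuinely needed and are left implicit in the paper's "quite straightforward" cases, so your write-up is, if anything, slightly more complete.
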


\begin{proof}
1. This follows from the definition of $\psi^*$.

2. For $s=0$ it is obvious.

For $s\geqslant 1$ proof is by induction on the length of derivation of $\psi$.
We will consider only the comprehension and choice axioms, for others proof is quite straightforward. 

Suppose $\psi$ is the comprehension axiom:
\[\exists x^{(k)}\forall n(n\in x\equiv\varphi(n)),\]
where $1\leqslant k\leqslant s$ and $\varphi$ is a $k$-simple formula not containing $x^{(k)}$.

To prove $\overline{\overline{\psi}}^*$, it is sufficient to show:
\begin{equation}
\exists X^k\left[M_k(X)\wedge\forall n\left( \{n\}^{k-1}\in X\equiv\varphi(n)^*\right) \right].
\label{eq:SA-BT}
\end{equation}

Denote $\chi(Z^{k-1})\leftrightharpoons\exists n,y\left(\{n\}^{k-1}\simeq y\wedge y=_{0,k-1}Z\wedge\varphi(n)^* \right).$ 

By (\ref{eq:SA_1}) and (\ref{eq:SA_4}), $\chi$ is a $k$-elementary formula and by the comprehension axiom in $BT_s$ there exists $X^k$ such that $\forall Z^{k-1}(Z\in X\equiv\chi(Z))$. For this $X$ we have:
\[M_k(X)\wedge\forall n\left[\{n\}^{k-1}\in X\equiv \varphi(n)^*\right],\]
which proves (\ref{eq:SA-BT}). 

Suppose $\psi$ is the choice axiom:
\begin{multline*}
\forall n\exists x^{(k)}
\left[ \varphi(n,x)\wedge\forall z^{(k)}
\left(\varphi(n,z)\supset\forall  m(m\in x\equiv m\in z)\right)\right] 
\\ 
\supset \exists y^{(k+1)}\forall n\exists x^{(k)}\left[ \varphi(n,x)\wedge \forall m(m\in x\equiv (n,m)\in y)\right],
\end{multline*}
where $1\leqslant k< s$ and $\varphi$ is a $k$-simple formula, so it does not contain the variable $y^{(k+1)}$.

To prove $\overline{\overline{\psi}}^*$, it is sufficient to show:
\begin{multline}
\forall n\exists X^k\left[M_k(X)\wedge \varphi(n,X)^*\wedge\forall Z^k \left[M_k(Z)
\wedge\varphi(n,Z)^*
\right.
\right.
\\
\left.
\left.
\supset \forall m\left(\{m\}^{k-1}\in X\equiv\{m\}^{k-1}\in Z\right)\right] \right]
\supset\exists Y^{k+1}
\left[M_{k+1}(Y)
\right.
\\
\left.
\wedge\forall n\exists X^k\left(M_k(X)\wedge\varphi(n,X)^*
\wedge\forall m\left(\{m\}^{k-1}\in X\equiv\{(n,m)\}^{k}\in Y\right)
 \right) \right].
\label{eq:SA-BT-1}
\end{multline}

Denote 
\[\xi(U^k)\leftrightharpoons\exists n,m,X^k\left[M_k(X)\wedge\varphi(n,X)^*\wedge\{(n,m)\}^{k}\simeq U\wedge\{m\}^{k-1}\in X \right].\]

It follows from (\ref{eq:SA_1})-(\ref{eq:SA_3}) that $\xi$ is a $(k+1)$-elementary formula. 

Suppose the premise of (\ref{eq:SA-BT-1}). By the comprehension axiom in $BT_s^{cl}$ there exists $Y^{k+1}$ such that 
\[\forall U^k(U\in Y\equiv\xi(U)).\]
Then $M_{k+1}(Y)$. Due to the premise of (\ref{eq:SA-BT-1}), for any $n$ there is $X^k$ such that $M_k(X)\wedge\varphi(n,X)^*$ and $\forall m\left(\{m\}^{k-1}\in X\equiv\{(n,m)\}^{k}\in Y\right).$ 
\end{proof}

\begin{corollary}
If $SA\vdash \psi$, then $BT^{cl}\vdash(\overline{\overline{\psi}})^*$.
\label{coro:SA-BT}
\end{corollary}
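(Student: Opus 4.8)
The plan is to reduce the corollary to Theorem \ref{theorem:SA-BT}.2 by exploiting the finiteness of formal derivations. First I would note that $SA$ is the union of its fragments: every formula of $SA$ lies in some $SA_s$, and every non-logical axiom of $SA$ is an axiom of some $SA_s$. Hence, given a derivation witnessing $SA\vdash\psi$, since any formal proof is a finite sequence of formulas, only finitely many sorts occur throughout it; let $s$ be the largest such sort. Every formula appearing in the derivation is then a formula of $SA_s$, and every non-logical axiom invoked is an axiom of $SA_s$, so the very same sequence is a valid derivation in $SA_s$. This gives $SA_s\vdash\psi$.

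Next I would apply Theorem \ref{theorem:SA-BT}.2 to this particular $s$, which yields $BT_s^{cl}\vdash(\overline{\overline{\psi}})^*$. Finally, since $BT_s^{cl}$ is by definition the restriction of $BT^{cl}$ to types $\leqslant s$, every axiom of $BT_s^{cl}$ is an axiom of $BT^{cl}$; the derivation in $BT_s^{cl}$ is therefore also a derivation in $BT^{cl}$, and consequently $BT^{cl}\vdash(\overline{\overline{\psi}})^*$, as required.

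The only genuine content is the finiteness-of-proofs step, and the mild subtlety there is to confirm that bounding the sorts appearing in a derivation suffices to place the whole proof inside a single fragment. One must check that the comprehension and choice schemes of $SA$, when the displayed formula uses only sorts $\leqslant s$, are literally instances of the corresponding schemes of $SA_s$ (the side condition that $\varphi$ be $k$\nobreakdash-simple is inherited, since $k\leqslant s$), and likewise that neither the interpretation $(\cdot)^*$ nor the closure operation $\overline{\overline{(\cdot)}}$ introduces sorts beyond those of $\psi$. No new obstacle arises: the substantive work is already carried out in Theorem \ref{theorem:SA-BT}.2, and this corollary is merely its globalisation to the full theory.
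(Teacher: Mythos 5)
Your proposal is correct and is exactly the argument the paper leaves implicit: a derivation in $SA$ uses only finitely many sorts and hence lives in some fragment $SA_s$, Theorem \ref{theorem:SA-BT}.2 then gives $BT_s^{cl}\vdash(\overline{\overline{\psi}})^*$, and $BT_s^{cl}$ is a subtheory of $BT^{cl}$. Your extra check that the comprehension and choice schemes restrict properly to $SA_s$ and that $(\cdot)^*$ and closure introduce no new sorts is a sensible (and accurate) verification of the routine details.
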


\subsection{Interpretation of $BT^{cl}$ in $BT$}

We generalise the negative interpretation defined in \cite{frie73} to the theory $BT$. 
For any constant $a$, which is not a comprehension constant $\underline{c}_n$, we define $a^-=a$. 

By simultaneous induction on G\"{o}del numbers we define interpretation $\varphi^-$ for any formula $\varphi$ of $BT$ and interpretation $\underline{c}_n^-$ for any comprehension constant $\underline{c}_n$.

1) Interpretation $\varphi^-$.

If $\varphi$ is an atomic formula, then $\varphi^-\leftrightharpoons \neg\neg\tilde{\varphi}$, where $\tilde{\varphi}$ is obtained from $\varphi$ by replacing each constant $a$ by $a^-$.

$\bot^-\leftrightharpoons\bot.$
\medskip

$(\psi\diamond \chi)^-\leftrightharpoons \psi^-\diamond \chi^-$ if $\diamond$ is a connective $\wedge$ or $\supset$.
\medskip

$(\psi\vee \chi)^-\leftrightharpoons \neg\neg(\psi^-\vee \chi^-).$
\medskip

$\left(\forall X\psi\right)^-\leftrightharpoons \forall X\psi^-,$
\medskip

$\left(\exists X\psi\right)^-\leftrightharpoons \neg\neg\exists X\psi^-,$ where $X$ is any variable of $BT$.
\medskip

2) In case when $n$ has the form $\llcorner Z^k.\widetilde{X}.\varphi\lrcorner$, we define $\underline{c}_n^-=\underline{c}_{n^-}$, where $n^-=\llcorner Z^k.\widetilde{X}.\varphi^-\lrcorner$.  In other cases we take $\underline{c}_n^-=0$.

For any external term $t$ its interpretation $t^-$ is obtained from $t$ by replacing each constant $a$ by its interpretation $a^-$.

\begin{lemma}
\begin{enumerate}
\item For any external terms $t$ and $\tau$ with no types $>s$:
\[BT_s\vdash(t\simeq\tau)^-\equiv\neg\neg(t^-\simeq\tau^-).\]
\item For any formula $\varphi$ of $BT_s$:
\[BT_s\vdash \neg\neg\varphi^-\equiv\varphi^-.\]
\item If an arithmetical formula $\varphi$ expresses a primitive recursive predicate, then:
\[BT_0\vdash \varphi^-\equiv\varphi.\]
\end{enumerate}
\label{lemma:BTcl-BT}
\end{lemma}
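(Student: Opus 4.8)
The plan is to prove all three parts by induction, relying on a few intuitionistically valid facts about double negation: $\neg\neg\neg A\equiv\neg A$ (so any formula literally of the form $\neg\neg B$ is double-negation stable); $\neg\neg(A\wedge B)\equiv\neg\neg A\wedge\neg\neg B$; $\neg\neg\exists X\,\neg\neg A\equiv\neg\neg\exists X A$; $\neg\neg\forall X A\supset\forall X\,\neg\neg A$; and the fact that if $B$ is stable then so are $A\supset B$ and $\forall X B$. I would establish part 2 first, since it isolates the stability property that both part 1 and part 3 invoke. Arguing by induction on the construction of $\varphi$, the atomic case gives $\varphi^-=\neg\neg\tilde\varphi$, stable because $\neg\neg\neg\neg\tilde\varphi\equiv\neg\neg\tilde\varphi$; the cases $\bot$, $\psi\vee\chi$ and $\exists X\psi$ land on formulas of the form $\bot$ or $\neg\neg(\cdots)$ and are stable for the same reason; and for $\wedge$, $\supset$, $\forall$ the translation commutes with the connective, so stability of the components (the induction hypothesis) propagates to the compound via the facts above (e.g. $\neg\neg(\psi^-\supset\chi^-)\supset(\psi^-\supset\chi^-)$ follows from stability of $\chi^-$).

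For part 1, I would first show by induction on the external term $t$ that $BT_s\vdash(t\simeq x)^-\equiv\neg\neg(t^-\simeq x)$ for a variable $x$, unfolding the recursive definition of $t\simeq x$. The base case ($t$ a constant or variable of type $s$) is immediate, since $(t\simeq x)^-=\neg\neg(x=_{0s}t^-)=\neg\neg(t^-\simeq x)$; this also absorbs the constant-renaming in $t^-$, including comprehension constants $\underline{c}_n\mapsto\underline{c}_{n^-}$. For $t=t_1t_2$ the definition expands $t\simeq x$ to an $\exists\exists$-conjunction of $t_1\simeq y$, $t_2\simeq z$ and $Ap(y,z,x)$; applying the translation, pushing the inserted double negations inward by $\neg\neg(A\wedge B)\equiv\neg\neg A\wedge\neg\neg B$ and $\neg\neg\exists X\,\neg\neg A\equiv\neg\neg\exists X A$, and invoking the induction hypothesis on $t_1,t_2$, collapses the result to $\neg\neg(t^-\simeq x)$. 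The statement for $t\simeq\tau$ then follows by expanding $t\simeq\tau$ as $\exists x(t\simeq x\wedge\tau\simeq x)$ and again collecting double negations.

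Part 3 carries the real content. Since $BT_0$ contains $HA$ and develops all primitive recursive functions, any arithmetical $\varphi$ expressing a primitive recursive predicate is decidable, $BT_0\vdash\varphi\vee\neg\varphi$, and hence stable, $BT_0\vdash\neg\neg\varphi\equiv\varphi$. Moreover an arithmetical formula (a formula of $PA$) uses no comprehension constants, so $a^-=a$ for every constant occurring and the renaming $\tilde{(\cdot)}$ is trivial. I would then prove $BT_0\vdash\varphi^-\equiv\varphi$ by induction on $\varphi$, taken presented with only bounded number quantifiers (as every primitive recursive predicate may be): atomic equalities reduce, via part 1 and decidability, to $\neg\neg(t\simeq s)\equiv(t\simeq s)$; the cases $\wedge$, $\supset$ and bounded $\forall$ go through directly by the induction hypothesis; and $\vee$, bounded $\exists$ produce a $\neg\neg(\cdots)$ which decidability of the (again primitive recursive) subformula lets me strip.

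The main obstacle is precisely this last point. The negative translation inserts double negations at $\vee$ and $\exists$ that are \emph{not} removable in general, so the argument cannot be purely formal: it must genuinely use that every subformula arising in the induction is itself a decidable primitive recursive predicate, which is what licenses collapsing $\neg\neg(\cdots)$ back to $(\cdots)$. Keeping track of this — in particular presenting the predicate so that all quantifiers are bounded, so that decidability is preserved at each inductive step — is the delicate part; the remaining manipulations are the routine intuitionistic double-negation calculus already used in parts 1 and 2.
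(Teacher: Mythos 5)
Your parts 1 and 2 take the same route as the paper, which disposes of them in one line each (``induction on the construction of the terms'' and ``induction on the complexity of $\varphi$''); the double-negation identities you list are exactly the content that has to be filled in, and your handling of the comprehension constants $\underline{c}_n\mapsto\underline{c}_{n^-}$ in the base case of part 1 is the only non-routine point. For part 3 your decomposition genuinely differs from the paper's. The paper argues in two steps: first $\varphi^-\equiv\neg\neg\varphi$ for arithmetical $\varphi$, by induction on complexity, and then $\neg\neg\varphi\equiv\varphi$ from decidability, invoked once at the very end. You instead run a single induction proving $\varphi^-\equiv\varphi$ directly, using decidability of the relevant subformula to strip the inserted $\neg\neg$ at each $\vee$ and bounded $\exists$ as it arises. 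Your version is in fact the more robust one: the paper's intermediate claim $\varphi^-\equiv\neg\neg\varphi$ requires, at an unbounded universal quantifier, the passage from $\forall n\neg\neg\psi$ to $\neg\neg\forall n\psi$, i.e.\ a double-negation shift that is not intuitionistically valid in general, so the paper's induction also tacitly depends on the bounded-quantifier, decidable-matrix presentation that you make explicit. Both arguments rest on reading ``$\varphi$ expresses a primitive recursive predicate'' as ``$\varphi$ is the canonical bounded-quantifier formula for that predicate''; an arbitrary arithmetical formula that merely happens to define such a predicate extensionally need not satisfy the conclusion. Under that (clearly intended, and sufficient for the application to $Proof_{BT_s}$) reading, your proof is correct.
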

\begin{proof}
1. Proof is by induction on the construction of the terms.

2. Proof is by induction on the complexity of $\varphi$.

3. Since $\varphi$ is an arithmetical formula, then $\varphi^-\equiv\neg\neg\varphi$ (it is proven by induction on the complexity of $\varphi$). Since $\varphi$ expresses a primitive recursive predicate, then $\neg\neg\varphi\equiv\varphi$.
\end{proof}

\begin{theorem}
For $s\geqslant 0:$ if $BT^{cl}_s\vdash \psi$, then $BT_s\vdash\psi^-$.
\label{theorem:BTcl-BT}
\end{theorem}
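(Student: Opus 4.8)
The plan is to proceed by induction on the length of a derivation of $\psi$ in $BT^{cl}_s$, showing that the negative translation of each axiom is provable in $BT_s$ and that the translation is propagated by the rules of inference. Everything hinges on the stability lemma, Lemma \ref{lemma:BTcl-BT}.2, which gives $BT_s\vdash\neg\neg\varphi^-\equiv\varphi^-$ for every formula $\varphi$; this single fact absorbs the one place where classical reasoning genuinely differs from intuitionistic reasoning. Note that $BT^{cl}_s$ and $BT_s$ share the same non-logical axioms, so only the comprehension, equality, combinatorial, and induction axioms, together with the classical logical principles, need to be checked.

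For the logical part I would present classical predicate logic as intuitionistic predicate logic augmented by one classical schema, say double negation elimination $\neg\neg\chi\supset\chi$. Its translation is $\neg\neg(\chi^-)\supset\chi^-$, which is precisely Lemma \ref{lemma:BTcl-BT}.2. The remaining intuitionistic axioms and all inference rules are handled by the routine verification that the translation commutes with $\wedge,\supset,\forall$ directly and with $\vee,\exists$ up to a double negation that the stability of translated subformulas lets one insert or remove as needed. The equality and combinatorial axioms are (closures of) atomic formulas and implications between them, each already intuitionistically derivable in $BT_s$; since the non-comprehension constants satisfy $a^-=a$, their translations follow by combining Lemma \ref{lemma:BTcl-BT}.1, which reduces $(t\simeq\tau)^-$ to $\neg\neg(t^-\simeq\tau^-)$, with stability and the monotonicity of $\neg\neg$. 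The induction axiom translates to an instance of intuitionistic induction applied to the stable formula $\varphi^-$, which causes no trouble.

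I expect the comprehension axiom to be the main obstacle, since its translation interacts with the reinterpretation $\underline{c}_n^-=\underline{c}_{n^-}$ of the comprehension constants. The first step is to check that the negative translation preserves $(k+1)$-elementariness: since $\varphi^-$ is built from $\varphi$ only by inserting double negations and replacing each comprehension constant by a comprehension constant, it introduces no new type, no quantifier over variables of type $k+1$, and no occurrence of $=_{0,k+1}$; hence $\varphi^-$ is $(k+1)$-elementary with Gödel number $n^-=\llcorner Z^k.\widetilde{X}.\varphi^-\lrcorner$. Therefore the comprehension axiom of $BT_s$ applied to $\varphi^-$ furnishes a $U^{k+1}$ with $\underline{c}_{n^-}(\widetilde{X})\simeq U$ and $\forall Z^k(Z\in_k U\equiv\varphi^-)$. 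On the other hand, unwinding the translation shows that $\overline{\overline{\psi}}^-$ reduces to proving $\neg\neg\exists U^{k+1}$ of the conjunction $\neg\neg(\underline{c}_{n^-}(\widetilde{X})\simeq U)\wedge\forall Z^k(\neg\neg(Z\in_k U)\equiv\varphi^-)$, where the first conjunct comes from Lemma \ref{lemma:BTcl-BT}.1 and the membership equivalence from $(Z\in_k U)^-=\neg\neg(Z\in_k U)$. For the witness just produced the first conjunct holds by inserting a double negation, and the equivalence $\neg\neg(Z\in_k U)\equiv\varphi^-$ follows from $Z\in_k U\equiv\varphi^-$ together with the stability of $\varphi^-$, which lets one pass from $\neg\neg(Z\in_k U)$ through $\neg\neg\varphi^-$ back to $\varphi^-$. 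This establishes the existential, and discharging the outer double negation is immediate.
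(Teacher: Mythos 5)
Your proposal is correct and follows essentially the same route as the paper: induction on the derivation, with the classical logical axioms absorbed by the stability Lemma \ref{lemma:BTcl-BT}.2, and the comprehension axiom handled by observing that $\varphi^-$ remains $(k+1)$-elementary, invoking comprehension in $BT_s$ with the reindexed constant $\underline{c}_{n^-}$, and then converting via Lemmas \ref{lemma:BTcl-BT}.1 and \ref{lemma:BTcl-BT}.2. The only difference is that you spell out the logical and remaining non-logical cases that the paper dispatches as standard.
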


\begin{proof}
Proof is by induction on the length of derivation of $\psi$. For axioms and derivation rules of classical logic the proof is standard, see, for example, \cite{drag87}. We consider only the case when $\psi$ is the comprehension axiom:
\[\exists U^{k+1}\left[\;\underline{c}_n(\widetilde{X})\simeq U\wedge\forall Z^k(Z\in U\equiv \varphi)
 \right],\]
where $0\leqslant k< s$, 
$n=\llcorner Z^k.\widetilde{X}.\varphi\lrcorner$, and $\varphi$ is a $(k+1)$-elementary formula. By the definition $\varphi^-$ is also a $(k+1)$-elementary formula, so 
\[\exists U^{k+1}\left[\;\underline{c}_{n^-}(\widetilde{X})\simeq U\wedge\forall Z^k(Z\in U\equiv \varphi^-) \right].\]

By Lemma \ref{lemma:BTcl-BT}.2, $\neg\neg\varphi^-\equiv\varphi^-$. Therefore:
\[\neg\neg\exists U^{k+1}\left[\;\neg\neg(\underline{c}_{n^-}(\widetilde{X})\simeq U)\wedge\forall Z^k(\neg\neg(Z\in U)\equiv \varphi^-) \right].\]

By Lemma \ref{lemma:BTcl-BT}.1, $\neg\neg(\underline{c}_{n^-}(\widetilde{X})\simeq U)\equiv(\underline{c}_{n}(\widetilde{X})\simeq U)^-.$ Therefore
\[\neg\neg\exists U^{k+1}\left[\;(\underline{c}_{n}(\widetilde{X})\simeq U)^-\wedge\forall Z^k(Z\in U\equiv \varphi)^- \right],\]
which is $\psi^-$.
\end{proof}

\begin{corollary}
If $BT^{cl}\vdash \psi$, then $BT\vdash\psi^-.$
\label{coro:BTcl-BT}
\end{corollary}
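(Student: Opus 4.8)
The plan is to obtain this corollary directly from Theorem \ref{theorem:BTcl-BT} by means of the standard finiteness-of-proofs argument, the same device already used in the proof of Theorem \ref{theorem:disjunction}. First I would assume $BT^{cl}\vdash\psi$ and fix a derivation of $\psi$ in $BT^{cl}$. Since any such derivation is a finite object, it mentions only finitely many variables, constants and axioms, and hence only finitely many of the types $0,1,2,\ldots$. Consequently there is some $s\geqslant 0$ large enough that every expression occurring in the derivation has type $\leqslant s$; that is, the whole derivation already takes place in the fragment $BT^{cl}_s$, so $BT^{cl}_s\vdash\psi$.

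Next I would apply Theorem \ref{theorem:BTcl-BT} to this $s$, which yields $BT_s\vdash\psi^-$. Finally, since $BT_s$ is by definition the fragment of $BT$ containing only types not greater than $s$, every axiom of $BT_s$ is an axiom of $BT$ and $BT_s$ is a sub-theory of $BT$; therefore $BT_s\vdash\psi^-$ gives $BT\vdash\psi^-$, as required.

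The only point that requires a little care is the claim that a finite derivation in $BT^{cl}$ is contained in some fragment $BT^{cl}_s$. This rests on the observation that the axiom schemata of $BT$ (and of $BT^{cl}$) are \emph{stratified} by type: each instance of the comprehension axiom, of the equality and combinatorial axioms, and of induction involves variables and comprehension constants of bounded type, so an instance used in the derivation already belongs to $BT^{cl}_s$ once $s$ bounds the types appearing in it. Thus no genuine obstacle arises; the corollary is an immediate consequence of the typed version in Theorem \ref{theorem:BTcl-BT} together with the compactness inherent in finite derivations.
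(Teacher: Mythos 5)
Your proposal is correct and is essentially the argument the paper intends: the corollary is derived from Theorem \ref{theorem:BTcl-BT} by observing that a finite derivation in $BT^{cl}$ involves only finitely many types and hence lies in some fragment $BT^{cl}_s$, exactly the finiteness device the paper itself uses in the proof of Theorem \ref{theorem:disjunction}. Your additional remark that the axiom schemata are stratified by type, so that each axiom instance in the derivation already belongs to $BT^{cl}_s$ once $s$ bounds the types occurring, is the right justification for that step.
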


\subsection{Summary of interpretabilities}
\begin{theorem}
The fragments $BT_s,BT_s^{cl},PATr_s$ and $SA_s$ are interpretable in one another.
\label{theorem:summary1}
\end{theorem}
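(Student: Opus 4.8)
The plan is to assemble the four one-directional interpretations established in the preceding subsections into a single directed cycle and then invoke transitivity of interpretability.

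First I would record the four basic interpretations, each valid for every $s\geq 0$. By Theorem \ref{theorem:BT-PATr} the translation $\varphi\mapsto\overline{\overline{\varphi}}^\triangle$ interprets $BT_s$ in $PATr_s$; by Theorem \ref{theorem:PATr-SA} the translation $\varphi\mapsto(\overline{\overline{\varphi}})^\sim$ interprets $PATr_s$ in $SA_s$; by Theorem \ref{theorem:SA-BT} the translation $\psi\mapsto(\overline{\overline{\psi}})^*$ interprets $SA_s$ in $BT_s^{cl}$; and by Theorem \ref{theorem:BTcl-BT} the negative translation $\psi\mapsto\psi^-$ interprets $BT_s^{cl}$ in $BT_s$. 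Writing $A\rightsquigarrow B$ for ``$A$ is interpretable in $B$'', these four results produce the directed cycle
\[BT_s\rightsquigarrow PATr_s\rightsquigarrow SA_s\rightsquigarrow BT_s^{cl}\rightsquigarrow BT_s.\]

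Second, I would invoke transitivity of interpretability. Each translation above sends formulas of its source language to formulas of its target language and theorems to theorems, so the composite of consecutive translations again sends theorems to theorems and is therefore itself an interpretation. Because the four arrows close up into a cycle visiting all four fragments, from any one of the theories one reaches every other by following the arrows; hence each of $BT_s$, $PATr_s$, $SA_s$ and $BT_s^{cl}$ is interpretable in each of the others, which is exactly the claim.

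The one point requiring care is the bookkeeping of the closure operator $\overline{\overline{\cdot}}$ under composition, since the first three theorems deliver the closure of the translated formula rather than the translated formula itself. The clean way around this is to note that for interpretability it suffices to preserve derivability of sentences: on a sentence the closure $\overline{\overline{\cdot}}$ is vacuous, and each translation is defined by induction on formula structure, so it acts uniformly and formulas with free parameters may be universally closed before translating. At the level of sentences the compositions therefore go through without obstruction, and the cycle delivers the full mutual interpretability of the four fragments.
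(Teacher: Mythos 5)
Your proposal is correct and follows exactly the paper's own argument: it cites the same four theorems to form the cycle $BT_s\rightarrow PATr_s\rightarrow SA_s\rightarrow BT_s^{cl}\rightarrow BT_s$ and concludes by transitivity. Your additional remark about handling the closure operator $\overline{\overline{\cdot}}$ under composition is a sound piece of bookkeeping that the paper leaves implicit.
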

\begin{proof}
Using symbol $\rightarrow$ for ``interpretable" we can summarise the results of this section:

$BT_s\rightarrow PATr_s$ (Theorem \ref{theorem:BT-PATr});
\medskip

$PATr_s\rightarrow SA_s$ (Theorem \ref{theorem:PATr-SA}.2);
\medskip

$SA_s\rightarrow BT^{cl}_s$ (Theorem \ref{theorem:SA-BT}.2);
\medskip

$BT^{cl}_s\rightarrow BT_s$ (Theorem \ref{theorem:BTcl-BT}).
\medskip

This means that the four fragments are interpretable in one another.
\end{proof}

\begin{theorem}
The theories $BT,BT^{cl},PATr$ and $SA$ are interpretable in one another.
\label{theorem:summary2}
\end{theorem}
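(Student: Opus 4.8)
The plan is to assemble the four full-theory corollaries of this section into a single cycle of interpretations and then invoke the transitivity of the interpretability relation, exactly as was done for the fragments in Theorem \ref{theorem:summary1}. First I would record the four interpretation results already established: $BT\rightarrow PATr$ by Corollary \ref{coro:BT-PATr}; $PATr\rightarrow SA$ by Corollary \ref{coro:PATr-SA}; $SA\rightarrow BT^{cl}$ by Corollary \ref{coro:SA-BT}; and $BT^{cl}\rightarrow BT$ by Corollary \ref{coro:BTcl-BT}. Written out, these give the cyclic chain
\[BT\rightarrow PATr\rightarrow SA\rightarrow BT^{cl}\rightarrow BT,\]
in which every one of the four theories occurs.

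The second step is to observe that interpretability is transitive: each of the four translations $\varphi\mapsto\overline{\overline{\varphi}}^\triangle$, $\varphi\mapsto(\overline{\overline{\varphi}})^\sim$, $\psi\mapsto(\overline{\overline{\psi}})^*$ and $\psi\mapsto\psi^-$ sends formulas to formulas and theorems to theorems, so composing any run of consecutive arrows again yields such a translation. Because the chain closes up into a loop, starting from any one of the four theories and travelling forward around it reaches each of the other three before returning to the start. Hence for every ordered pair among these theories there is a (possibly composite) interpretation of the first in the second, which is precisely the assertion that all four are interpretable in one another.

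I do not expect any genuine obstacle: the statement is an immediate consequence of the four already-proven corollaries together with the standard fact that the composition of two interpretations is again an interpretation, so all the real content lives in the earlier subsections rather than here. The only point worth a moment's care is the bookkeeping — confirming that the single cycle really does visit all four theories, so that every pair is covered — together with the remark that the superscript and bar decorations accumulated along a composite route are harmless, since each individual translation already preserves derivability.
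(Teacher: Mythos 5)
Your proposal is correct and follows exactly the paper's own argument: the paper likewise derives the theorem from Corollaries \ref{coro:BT-PATr}, \ref{coro:PATr-SA}, \ref{coro:SA-BT} and \ref{coro:BTcl-BT}, assembling them into the cycle $BT\rightarrow PATr\rightarrow SA\rightarrow BT^{cl}\rightarrow BT$ and closing it by transitivity, just as in Theorem \ref{theorem:summary1}. No gaps.
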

\begin{proof}
This follows from Corollaries \ref{coro:BT-PATr}, \ref{coro:PATr-SA}, \ref{coro:SA-BT} and \ref{coro:BTcl-BT} similarly to the previous theorem.
\end{proof}

\subsection{Comparison of $SA$ with Simpson's subsystems of second order arithmetic}

In \cite{simp10} Simpson introduced several formal theories for reverse mathematics. All these theories are subsystems of second order arithmetic and have classical logic; most of the theories have only restricted induction axiom:
\[0\in X\wedge\forall n(n\in X\supset n+1\in X)\supset\forall n(n\in X).\]

Let us denote $SA^r$ the theory $SA$ where the induction axiom is restricted to formulas with no set quantifiers. Then $SA^r$ is equivalent to the theory $SA$ where the induction axiom has the form:
\[0\in z^{(k)}\wedge\forall n(n\in z\supset n+1\in z)\supset\forall n(n\in z),\; k\geqslant 1.\]

If we similarly restrict the induction axiom in the theories $BT$, $BT^{cl}$ and $PATr$, then the theorem about their mutual interpretability still holds, as well as the theorem about the mutual interpretability of their corresponding fragments.

With respect to proof-theoretical strength, the theory $SA^r$ is between the Simpson's theories $ACA_0$ (the second-order arithmetic with arithmetical comprehension) and $\triangle^1_1-CA_0$ (the second order arithmetic with $\triangle^1_1$ comprehension).

Ordinary mathematics can be developed in $SA$ in a similar way that Simpson \cite{simp10} develops it in the theory $ACA_0$. We believe that some definitions can be simplified in $SA$ due to its multi-sorted language but this requires more research. 

\section{Comparison of the proof-theoretical strengths of  fragments $BT_s$, $PATr_s$ and $SA_s$}
\label{section:comparison}

It follows from Theorem \ref{theorem:summary2} that the theories $BT,BT^{cl}$, $PATr$ and $SA$ are equiconsistent. It follows from Theorem \ref{theorem:summary1} that for each $s\geqslant 0$ the fragments $BT_s,BT_s^{cl},PATr_s$ and $SA_s$ are equiconsistent. 

\begin{theorem}
For $s\geqslant 0:$
\begin{enumerate}
\item $PATr_{s+1}\vdash Con_{PATr_s};$
\item $BT_{s+1}\vdash Con_{BT_s};$
\item $SA_{s+1}\vdash Con_{SA_s}.$
\end{enumerate}
\label{theorem:strength}
\end{theorem}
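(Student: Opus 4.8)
The plan is to prove part (1) directly, exploiting the built-in truth predicate of $PATr_{s+1}$, and then to reduce parts (2) and (3) to it using the mutual interpretability established in Theorem \ref{theorem:summary1} together with the fact that all the interpretations of Section \ref{section:interpretability} act as the identity (up to provable equivalence) on arithmetical formulas.

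For part (1) I would formalise inside $PATr_{s+1}$ the soundness of $PATr_s$ with respect to the predicate $Tr_{s+1}$, which by axioms (Tr1)--(Tr6) is precisely the Tarskian truth predicate for $PATr_s$. Concretely, I would prove the reflection statement $\forall m\forall l\,[Pv_{PATr_s}(m)\wedge Ev(m,l)\supset Tr_{s+1}(m,l)]$ by formalised induction on the length of the $PATr_s$-derivation witnessing $Pv_{PATr_s}(m)$. The logical axioms and rules are handled by the commutation clauses (Tr5) and (Tr6); the Peano axioms by (Tr2); and the induction schema of $PATr_s$ is reduced, via (Tr6), to a genuine instance of induction over the $PATr_{s+1}$-formula $Tr_{s+1}(j,subst(l,i,n))$, which is legitimate because $PATr_{s+1}$ has full induction. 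Once reflection is proved, instantiating $m=\llcorner\bot\lrcorner$ and invoking (Tr4), namely $\neg Tr_{s+1}(\llcorner\bot\lrcorner,l)$, yields $\neg Pv_{PATr_s}(\llcorner\bot\lrcorner)$, that is $Con_{PATr_s}$.

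For parts (2) and (3) I would first transport $Con_{PATr_s}$ into the other two systems. By Theorem \ref{theorem:PATr-SA}, $PATr_{s+1}$ is interpretable in $SA_{s+1}$, and composing the interpretations of Theorems \ref{theorem:PATr-SA}, \ref{theorem:SA-BT} and \ref{theorem:BTcl-BT} shows $PATr_{s+1}$ is interpretable in $BT_{s+1}$. Since $Con_{PATr_s}$ is a closed arithmetical ($\Pi^0_1$) sentence, its image under $\sim$ is itself (Theorem \ref{theorem:PATr-SA}.1), its image under $*$ is itself (Theorem \ref{theorem:SA-BT}.1), and its image under the negative interpretation is provably equivalent to itself in $BT_0$ because $Proof_{PATr_s}$ is primitive recursive (Lemma \ref{lemma:BTcl-BT}.3); hence $SA_{s+1}\vdash Con_{PATr_s}$ and $BT_{s+1}\vdash Con_{PATr_s}$. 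Finally, the reverse interpretations $SA_s\to\cdots\to PATr_s$ and $BT_s\to PATr_s$ (Theorem \ref{theorem:BT-PATr}) supply primitive-recursive proof translations, so the relative-consistency implications $Con_{PATr_s}\supset Con_{SA_s}$ and $Con_{PATr_s}\supset Con_{BT_s}$ are provable in the arithmetic contained in $SA_{s+1}$ and $BT_{s+1}$ respectively; combining these with the transported $Con_{PATr_s}$ gives $SA_{s+1}\vdash Con_{SA_s}$ and $BT_{s+1}\vdash Con_{BT_s}$.

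The main obstacle I expect lies entirely within part (1): verifying that the lower truth axioms (Tr1)--(Tr6) of $PATr_s$, which themselves mention the predicates $Tr_1,\ldots,Tr_s$, are $Tr_{s+1}$-true. This stacking step requires repeatedly unwinding the compositional clause (Tr3) at level $s+1$ against formulas that already contain truth predicates, together with careful bookkeeping of the arithmetised functions $eval$ and $subst$. By contrast, the induction schema, which is usually the delicate point in such consistency proofs, is tamed cheaply here, since (Tr6) passes it straight to the full induction of $PATr_{s+1}$.
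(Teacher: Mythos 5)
Your proposal is correct in outline, and its transport mechanism is exactly the paper's: push a closed arithmetical consistency statement through the chain of interpretations, using the fact that $\sim$ and $*$ act as the identity on arithmetical sentences (Theorems \ref{theorem:PATr-SA}.1 and \ref{theorem:SA-BT}.1) and that Lemma \ref{lemma:BTcl-BT}.3 makes the negative translation of a $\Pi^0_1$ sentence provably equivalent to itself. The differences lie in where the chain is anchored. For part (1) the paper simply cites Lemma 3.b of \cite{kach14}; your formalized reflection argument via $Tr_{s+1}$ is the expected proof of that lemma, so you are supplying an argument the paper outsources. Your worry about the ``stacking'' step is well placed: as the axioms are literally stated here, (Tr3) only relates adjacent levels, so the behaviour of $Tr_{s+1}$ on atomic subformulas $Tr_k(t_i,t_j)$ with $k<s$ must be recovered with some care --- this is precisely the content of the cited lemma, not something the present paper settles. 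For parts (2) and (3) the paper anchors at $PATr_{s+1}\vdash Con_{BT_s}$ (Theorem 2 of \cite{kach14}, displayed as \eqref{eq:Lobach}), which makes part (2) a pure transport with no further arithmetization, and only formalizes the interpretations $SA_s\to BT^{cl}_s\to BT_s$ inside $PATr_{s+1}$ for part (3). You anchor instead at $Con_{PATr_s}$, so you must additionally formalize Theorem \ref{theorem:BT-PATr} ($BT_s\to PATr_s$) inside the arithmetic fragments of $BT_{s+1}$ and $SA_{s+1}$ to get $Con_{PATr_s}\supset Con_{BT_s}$ and $Con_{PATr_s}\supset Con_{SA_s}$; this costs one extra (routine but nontrivial) arithmetization of a proof translation, and in $BT_{s+1}$ it must be carried out intuitionistically, which is unproblematic since the implication is obtained as the contrapositive of $Pv_{BT_s}(\llcorner\bot\lrcorner)\supset Pv_{PATr_s}(\llcorner\bot\lrcorner)$. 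What your route buys is a proof that is self-contained given part (1), whereas the paper's part (2) leans entirely on the external result \eqref{eq:Lobach}. Both routes go through.
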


\begin{proof}
1. This was proven in \cite{kach14} as Lemma 3.b.

2. By Theorem 2 in \cite{kach14}, 
\begin{equation}
PATr_{s+1}\vdash Con_{BT_s}. 
\label{eq:Lobach}
\end{equation}

By Theorem \ref{theorem:PATr-SA}.1, 2, $SA_{s+1}\vdash Con_{BT_s}$, since $Con_{BT_s}$ is a closed arithmetical formula.

Similarly, by Theorem \ref{theorem:SA-BT}.1, 2,
$BT_{s+1}^{cl}\vdash Con_{BT_s}$ and by Theorem \ref{theorem:BTcl-BT}, $BT_{s+1}\vdash (Con_{BT_s})^-.$ 

The formula $Con_{BT_s}$ is $\forall n\neg Proof_{BT_s}(\llcorner \bot\lrcorner,n)$, which is a closed formula and $Proof_{BT_s}(\llcorner \bot\lrcorner,n)$ expresses a primitive recursive predicate. So by Lemma \ref{lemma:BTcl-BT}.3, 
\[(Con_{BT_s})^-\equiv\forall n\neg (Proof_{BT_s}(\llcorner \bot\lrcorner,n))^-\equiv\forall n\neg Proof_{BT_s}(\llcorner \bot\lrcorner,n)\equiv Con_{BT_s}.\]

Therefore $BT_{s+1}\vdash Con_{BT_s}$.

3. By formalising the proofs of Theorems \ref{theorem:SA-BT}.2 and \ref{theorem:BTcl-BT} in $PATR_{s+1}$ we get:
\[PATR_{s+1}\vdash Pv_{SA_s}(m)\supset Pv_{BT_s}(\llcorner(\overline{\overline{\varphi}}_m)^{*-}\lrcorner).\]

In particular, for $m=\llcorner\bot\lrcorner$ we have: 
\[PATR_{s+1}\vdash Pv_{SA_s}(\llcorner\bot\lrcorner)\supset Pv_{BT_s}(\llcorner\bot\lrcorner).\]

So $PATR_{s+1}\vdash\neg Pv_{BT_s}(\llcorner\bot\lrcorner)
\supset\neg Pv_{SA_s}(\llcorner\bot\lrcorner),$ that is 
\[PATR_{s+1}\vdash Con_{BT_s}\supset Con_{SA_s}\]
and by \eqref{eq:Lobach}, $PATR_{s+1}\vdash Con_{SA_s}.$

By Theorem \ref{theorem:PATr-SA}.1, 2, we get $SA_{s+1}\vdash Con_{SA_s}.$
\end{proof}

Clearly, the Beeson's theory $BEM+(CA)$ is the same as the fragment $BT_1$. This implies the next corollary.
\begin{corollary}
\begin{enumerate}
\item $BT_2\vdash Con_{BEM+(CA)}$.
\item  $BT\vdash Con_{BEM+(CA)}$.
\end{enumerate}
\end{corollary}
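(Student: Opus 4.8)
The plan is to obtain both parts as immediate specialisations of Theorem \ref{theorem:strength}.2, using the identification $BEM+(CA)=BT_1$ recorded just above the corollary statement. Since the fragment $BT_1$ of $BT$ coincides with Beeson's theory $BEM+(CA)$, the two theories share the same proof predicate, and hence $Con_{BT_1}$ is literally the same closed arithmetical formula as $Con_{BEM+(CA)}$; under the fixed G\"odel numbering both are $\forall n\,\neg Proof_{BEM+(CA)}(\llcorner\bot\lrcorner,n)$.

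For part 1, I would instantiate Theorem \ref{theorem:strength}.2 at $s=1$, which gives $BT_2\vdash Con_{BT_1}$. Rewriting $Con_{BT_1}$ as $Con_{BEM+(CA)}$ via the identification above yields $BT_2\vdash Con_{BEM+(CA)}$.

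For part 2, I would use that $BT_2$ is a subtheory of $BT$: every axiom of $BT_2$ is an axiom of $BT$ and the two theories share the same logic, so any formula derivable in $BT_2$ is derivable in $BT$. Applying this to the formula obtained in part 1 gives $BT\vdash Con_{BEM+(CA)}$.

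There is no substantive obstacle here, as the corollary is a direct consequence of the strength hierarchy established in Theorem \ref{theorem:strength}. The only point worth checking is that the G\"odel numbering used to form $Con_{BEM+(CA)}$ agrees with the one used for $BT_1$; this holds precisely because the two theories are identified, so the formulas $Con_{BEM+(CA)}$ and $Con_{BT_1}$ are the same object rather than merely provably equivalent.
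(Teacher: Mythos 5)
Your proposal is correct and matches the paper's intended argument exactly: the paper derives the corollary from the identification $BEM+(CA)=BT_1$ together with Theorem \ref{theorem:strength}.2 at $s=1$, and part 2 follows since $BT_2$ is a subtheory of $BT$. Your additional remark about the G\"odel numbering agreeing under the identification is a reasonable (if minor) point of care that the paper leaves implicit.
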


\section{Discussion}
\label{discussion}

In this paper we described the axiomatic theory $BT$, which is a suitable formal theory for developing constructive mathematics, due to its constructive properties such as the existence and disjunction properties, and consistency with the formal Church thesis. Also $BT$ is interpretable in relatively weak versions $Ar+(AC!)$ and $Ar+(\Delta_1^1-C)$ of second-order arithmetic.

We studied the proof-theoretical strength of $BT$ by comparing it with the axiomatic theories $PATr$ and $SA$, and we showed that all three theories are interpretable in one another. We also showed that the fragments $BT_s$, $PATr_s$ and $SA_s$ are interpretable in one another and that each of them is weaker than a corresponding next fragment. In particular, this means that each of the theories $BT$, $PATr$ and $SA$ is stronger than the predicative second-order arithmetic. 

Next we plan to use the advantage of multi-typed language of $BT$ to state and prove constructive versions of theorems of classical mathematics in $BT$. We also plan to investigate the consistency of $BT$ with a stronger version of the formal Church thesis.

\bibliography{Farida}

\end{document}